\renewcommand{\leq}{\leqslant}
\renewcommand{\geq}{\geqslant}
\newcommand{\bbz}{\mathbb{Z}}
\newcommand{\bbr}{\mathbb{R}}
\newcommand{\bbq}{\mathbb{Q}}
\newcommand{\bbc}{\mathbb{C}}
\newcommand{\ls}{\lesssim}
\newcommand*{\bbe}{
  \mathop{
    \mathchoice{\vcenter{\hbox{\larger[4]$\mathbb{E}$}}}
               {\kern0pt\mathbb{E}}
               {\kern0pt\mathbb{E}}
               {\kern0pt\mathbb{E}}
  }\displaylimits
}
\newcommand{\abs}[1]{\left\lvert #1\right\rvert}
\newcommand{\Abs}[1]{\lvert #1\rvert}
\newcommand{\brac}[1]{\left( #1\right)}
\newcommand{\norm}[1]{\left\lVert #1\right\rVert}
\newtheorem{theorem}{Theorem}
\newtheorem{lemma}{Lemma}
\newtheorem{corollary}{Corollary}
\newtheorem{proposition}{Proposition}
\theoremstyle{definition}
\newtheorem{definition}{Definition}
\begin{document}
\title[Sum-product problem with few prime factors]{More on the sum-product problem for integers with few prime factors}
\author{Rishika Agrawal}
\address{Department of Mathematics, University of Georgia, Athens, GA, 30602 }
\email{rishika.agrawal@uga.edu}
\author{Thomas F. Bloom}
\address{Department of Mathematics, University of Manchester, Manchester, M13 9PL}
\email{thomas.bloom@manchester.ac.uk}
\author{Giorgis Petridis}
\address{Department of Mathematics, University of Georgia, Athens, GA, 30602 }
\email{giorgis@cantab.net}

\begin{abstract}
We show that if $A\subset \bbz$ is a finite set of integers in which every integer is divisible by $O(1)$ many primes then
\[\max(\lvert A+A\rvert,\lvert AA\rvert) \geq \lvert A\rvert^{12/7-o(1)}\]
and, for any $m\geq 2$,
\[\max(\lvert mA\rvert, \lvert A^{(m)}\rvert) \geq \lvert A\rvert^{\frac{2}{3}m+\frac{1}{3}-o(1)}.\]
Finally, we show that if $A\subset \bbq$ is a finite set of rationals in which the numerator and denominator of every $x\in A$ is divisible by $O(1)$ many primes then $\lvert A+AA\rvert \geq \lvert A\rvert^{2-o(1)}$.
\end{abstract}

\maketitle

The sum-product conjecture, a cornerstone of additive combinatorics, states (in its classical form) that, for any finite set of integers $A\subset \bbz$, 
\[\max(\abs{A+A},\abs{AA})\geq \abs{A}^{2-o(1)}.\]
This is often attributed to Erd\H{o}s and Szemer\'{e}di, who proved the first results in this direction \cite{ErSz83}, but it seems to have first appeared in the literature in a problems list of Erd\H{o}s in 1977 \cite{Er77}. The best-known bounds in this direction for arbitrary sets of integers have an exponent of the shape $\frac{4}{3}+c$ for some small constant $c>0$ - we refer to \cite{Bl25} for more on the history and the latest progress in this direction.

This paper is concerned with the sum-product phenomenon under the assumption that every $n\in A$ has few prime factors. This variant was first studied by Hanson, Rudnev, Shkredov, and Zhelezov \cite{HRSZ25}, who showed that this assumption allows for much stronger bounds, proving the following.
\begin{theorem}[Hanson, Rudnev, Shkredov, and Zhelezov \cite{HRSZ25}]\label{th-old}
Let $A\subset \bbz$ be a finite set of integers and let $k\geq 1$ be a fixed integer. If $\omega(n)\leq k$ for all $n\in A$ then
\[\max(\abs{A+A},\abs{AA})\gg \abs{A}^{5/3-o(1)}.\]
\end{theorem}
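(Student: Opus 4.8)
The plan is to run the argument in contrapositive form: assume $\max(\abs{A+A},\abs{AA}) \leq \abs{A}^{\alpha}$ and push towards $\alpha \geq 5/3-o(1)$. First I would invoke the Pl\"{u}nnecke--Ruzsa inequalities: $\abs{AA}$ small controls all iterated product sets, and in particular the multiplicative doubling constant satisfies $K := \abs{AA}/\abs{A} \leq \abs{A}^{\alpha-1}$; a Balog--Szemer\'{e}di--Gowers reduction (or simply working with $A$ itself, since the doubling is genuinely polynomially small) lets me treat $A$ as a set of multiplicative doubling $K$. Everything then hinges on converting multiplicative structure, \emph{amplified by the hypothesis $\omega(n)\leq k$}, into additive spreading.

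For the structural heart I would use multiplicative energy. From $\abs{AA}=K\abs{A}$ we get $E^{\times}(A) \gs \abs{A}^{3}/K$. The crucial rigidity coming from few prime factors is this: if $n_1 n_2 = n_3 n_4$ with all $n_i\in A$, then $\supp(n_3),\supp(n_4)\subseteq\supp(n_1 n_2)$, a set of at most $2k$ primes, so the energy splits as $E^{\times}(A)\leq\sum_{S}E^{\times}(A_S)$ over prime sets $S$ with $\abs{S}\leq 2k$, where $A_S$ denotes the set of elements of $A$ all of whose prime factors lie in $S$. A (careful) pigeonholing over the relevant $S$ then isolates a large subset $A'\subseteq A$ sitting inside a multiplicative semigroup $\Inn{q_1,\dots,q_d}$ generated by only $d\leq 2k$ primes, still carrying a substantial share of the energy. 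This is where I expect the real difficulty to lie, in two ways: making the loss incurred in this extraction as small as possible (the final exponent is sensitive to it), and separately handling the ``spread'' regime in which the number of primes dividing elements of $A$ is itself large, so that no single bounded $S$ dominates --- there one has to argue directly that products of elements with (nearly) disjoint prime supports are essentially distinct, and hence already force $\abs{AA}$ to be large.

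On the additive side I would exploit the fact that elements of $\Inn{q_1,\dots,q_d}$ are $S$-units for the fixed finite set $S=\{q_1,\dots,q_d,\infty\}$. By the quantitative $S$-unit equation theorem of Evertse--Schlickewei--Schmidt, the equation $u_1+u_2=u_3+u_4$ has only $O_d(1)$ non-degenerate solutions up to scaling, while the degenerate ones are the trivial $\{u_1=u_3,\,u_2=u_4\}$ and $\{u_1=u_4,\,u_2=u_3\}$; counting honest quadruples from $A'$ then gives $E^{+}(A')\ll_k\abs{A'}^{2}$. Hence, by Cauchy--Schwarz, $\abs{A+A}\geq\abs{A'+A'}\geq\abs{A'}^{4}/E^{+}(A')\gg_k\abs{A'}^{2}$. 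Combining this with the lower bound on $\abs{A'}$ from the previous step and with $K\leq\abs{A}^{\alpha-1}$, and optimising over the parameters, should yield $\abs{A}^{\alpha}\gg\abs{A}^{2-O_k(1)(\alpha-1)-o(1)}$, which rearranges to $\alpha\geq 5/3-o(1)$; the $o(1)$ swallows the constants depending on $k$, which is exactly why $k$ must be held fixed. A naive implementation of this scheme reaches only an exponent weaker than $5/3$, so to obtain $5/3$ I would expect to need a more refined energy inequality or an iteration/bootstrapping of the structural step, and pinning that down is the crux of the proof.
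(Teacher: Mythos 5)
There is a genuine gap, and it sits exactly where you suspected, in the structural step. Your plan is to pigeonhole the multiplicative energy over prime supports and extract a large subset $A'$ lying inside a \emph{single} multiplicative group generated by $O(k)$ primes; but such a subset need not exist with any useful size. For example, if $A=\{p\cdot 2^j : p\in P,\ 1\leq j\leq n\}$ with $P$ a set of $n$ primes, then every element has two prime factors and $\abs{AA}\ll \abs{A}^{3/2}$, yet any subset of $A$ contained in a rank-$r$ multiplicative group has size $O(r\,\abs{A}^{1/2})$ (at most $r$ of the primes $p$ can occur), so the bound $\abs{A'+A'}\gg_k\abs{A'}^2$ you would get is vacuous. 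Moreover your energy decomposition $E^{\times}(A)\leq\sum_S E^{\times}(A_S)$ runs over potentially $\gg\abs{A}$ many supports $S$, so the pigeonholing can lose a polynomial factor in $\abs{A}$ and gives no control relating $\abs{A'}$ to $K=\abs{AA}/\abs{A}$. The correct structural statement (Proposition~\ref{prop-small}, proved via the counting argument of Lemmas~\ref{lem-warmup1} and~\ref{lem-warmup2}, not via energy) is weaker but usable: a subset $A'$ with $\abs{A'}\gg_k\abs{A}$ is covered by $O_k(K)$ \emph{dilates} of a rank-$O(k)$ group $\bbq_S$; the number of cosets, not the size of $A'$, is what carries the dependence on $K$.

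Once the structure is in this form, the additive side also needs more than the direct $S$-unit count you describe. Applying Theorem~\ref{th-subspace} naively to a set covered by $M$ dilates gives only $E(A')\ll_k \abs{A'}^2+\abs{A'}M^3$, which after $\abs{A+A}\geq\abs{A'}^4/E(A')$ yields the exponent $3/2$, not $5/3$. The missing amplification -- the ``more refined energy inequality'' you flag as the crux -- is to bound the higher energies $E_{2m}(A')\ll_{m,k}\abs{A'}^m+\abs{A'}M^{2m-1}$ by the same $S$-unit argument and then transfer back via H\"{o}lder, $E(A')\leq\abs{A'}^{\frac{m-2}{m-1}}E_{2m}(A')^{\frac{1}{m-1}}$, giving $E(A')\ll\abs{A'}^2+\abs{A'}M^{2+o(1)}$ as $m\to\infty$ (Lemmas~\ref{lem-energies} and~\ref{lem-energy}; in the original paper of Hanson, Rudnev, Shkredov, and Zhelezov this role is played by a Chang-type lemma with martingales). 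With $M\ll_k K$ this gives $\max(K\abs{A},\abs{AA})\geq\abs{A}^{5/3-o(1)}$. So both halves of your outline need to be replaced: the single-group extraction by the coset-covering argument, and the quadruple count by the higher-energy H\"{o}lder amplification.
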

The exponent $5/3$ is, in a sense, a natural barrier of the method used in \cite{HRSZ25}. Indeed, they actually prove the stronger result that either $\abs{AA}\gg \abs{A}^{5/3-o(1)}$ or the additive energy\footnote{We review standard definitions and notation in Section~\ref{sec-defs}.} $E(A)$ is at most $\abs{A}^{7/3+o(1)}$ (which by the Cauchy-Schwarz inequality implies $\abs{A+A}\geq \abs{A}^{5/3-o(1)}$). An example of Balog and Wooley \cite{BaWo17} (that we give in Section~\ref{sec-sketch}) shows that the exponent $5/3$ is best possible for this stronger statement with additive energy.

In this paper we nonetheless give an improvement for the original statement, replacing the exponent $5/3$ with $12/7$, pushing past the Balog-Wooley barrier.

\begin{theorem}\label{th-main}
Let $A\subset \bbz$ be a finite set of integers and let $k\geq 1$ be a fixed integer. If $\omega(n)\leq k$ for all $n\in A$ then
\[\max(\abs{A+A},\abs{AA})\gg \abs{A}^{12/7-o(1)}.\]
and
\[\max(\abs{A-A},\abs{AA})\gg \abs{A}^{12/7-o(1)}.\]
\end{theorem}

For brevity we have stated this with $k$ some fixed integer, but (just as in \cite{HRSZ25}) the method works, and delivers the same exponent, as long as $k=o(\frac{\log \abs{A}}{\log\log \abs{A}})$. A precise quantitative statement is given in Theorem~\ref{th-mainprecise}.

Those familiar with \cite{HRSZ25} may also be interested to note that we have found a simplification of their method that allows one to completely avoid the use of Chang's lemma and any kind of martingale technology, which played a crucial role in \cite{HRSZ25}. This is instead replaced with an application of H\"{o}lder's inequality to amplify the consequences of the bounds on $S$-unit equations. We take the opportunity to give this simplified proof of the main result of \cite{HRSZ25} in Section~\ref{sec-add2}.

Another popular problem in the sum-product family asks about the behaviour of iterated sum and product sets $mA=A+\cdots+A$ and $A^{(m)}=A\cdots A$ as $m\to \infty$. A natural generalisation of the original sum-product conjecture (also mentioned in \cite{ErSz83}) is that, for any finite set of integers $A\subset \bbz$ and any $m\geq 1$,
\[\max(\abs{mA},\Abs{A^{(m)}})\geq \abs{A}^{(1-o(1))m}.\]
The first non-trivial results in this direction were achieved by Bourgain and Chang \cite{BoCh04}, who proved that 
\[\max(\abs{mA},\Abs{A^{(m)}})\geq \abs{A}^{f(m)}\]
for some function $f(m)$ which $\to \infty$ as $m\to \infty$. The best-known bounds known for arbitrary finite sets of integers, due to P\'alv\"olgyi and Zhelezov \cite{PaZh21}, allow one to take $f(m)\gg \frac{\log m}{\log\log m}$.

Our second main result in this paper is that, under the same assumption that all $n\in A$ have few prime factors, we can establish such a lower bound with $f(m)\gg m$.
\begin{theorem}\label{th-main2}
Let $A\subset \bbz$ be a finite set of integers and let $k\geq 1$ be a fixed integer. If $\omega(n)\leq k$ for all $n\in A$ then for all $m\geq 1$
\[\max(\abs{mA},\Abs{A^{(m)}})\gg \abs{A}^{\frac{2}{3}m+\frac{1}{3}-o(m)}.\]
\end{theorem}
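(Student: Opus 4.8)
The plan is to adapt the streamlined proof of Theorem~\ref{th-old} given in Section~\ref{sec-add2}: one shows that if $\abs{A^m}$ is not already as large as claimed, then the order-$m$ additive energy of $A$ is small, and one then concludes by Cauchy--Schwarz. We may assume $m\geq 2$. After a harmless reduction we may assume $A\subseteq\bbz_{>0}$ (if $A$ has negative elements, replace $A$ by $A$ or $-A$ so that the positive part $A_+$ has $\geq\abs A/2$ elements and note $\abs{mA}\geq\abs{mA_+}$, $\abs{A^m}\geq\abs{A_+^m}$) and, after dilating, that $\gcd(A)=1$; neither step increases any $\omega(n)$. Write
\[T_m=T_m(A):=\#\{(a_1,\dots,a_m,b_1,\dots,b_m)\in A^{2m}\ :\ a_1+\cdots+a_m=b_1+\cdots+b_m\}.\]
The representation function of $mA$ has $\ell^1$-norm $\abs A^m$ and squared $\ell^2$-norm $T_m$, so Cauchy--Schwarz gives $\abs{mA}\geq\abs A^{2m}/T_m$; together with $\abs{mA}\geq\abs A$ this reduces the theorem to the dichotomy
\[\text{either}\qquad\abs{A^m}\geq\abs A^{\frac23 m+\frac13-o(m)}\qquad\text{or}\qquad T_m\leq\abs A^{\frac43 m-\frac13+o(m)}.\]
For $m=2$ this is exactly the dichotomy of \cite{HRSZ25} between $\abs{AA}$ large and $E(A)=T_2\leq\abs A^{7/3+o(1)}$.

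\smallskip

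So suppose $\abs{A^m}$, and hence $\abs{AA}\leq\abs{A^m}$, is small; we must bound $T_m$. A solution counted by $T_m$ is a vanishing signed sum of the $2m$ positive integers $a_1,\dots,a_m,-b_1,\dots,-b_m$, and as such decomposes canonically into minimal vanishing sub-sums, each necessarily containing at least one $a$ and one $b$. Grouping solutions by the induced partition of the $2m$ positions, a block on $\ell$ positions contributes a factor $N_\ell(A)$, the number of $\ell$-tuples from $A$ forming a non-degenerate $\ell$-term additive relation (thus $N_2(A)=\abs A$, the diagonal $a=b$, while $N_4(A)$ is the non-trivial part of $E(A)$). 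Since there are $\abs A^{o(m)}$ partitions and the block sizes sum to $2m$,
\[T_m\ \leq\ \abs A^{o(m)}\,\max\Big\{\,N_{\ell_1}(A)\cdots N_{\ell_p}(A)\ :\ p\geq 1,\ \ell_i\geq 2,\ \ell_1+\cdots+\ell_p=2m\,\Big\}.\]
Hence it suffices to prove the per-block estimate $N_\ell(A)\leq\abs A^{\frac23\ell-\frac13+o(1)}$ for each fixed $\ell\geq 2$ (under the standing assumption that $\abs{AA}$ is small): granting it, every product above is at most $\abs A^{\frac23(\ell_1+\cdots+\ell_p)-\frac13 p+o(m)}=\abs A^{\frac43 m-\frac13 p+o(m)}\leq\abs A^{\frac43 m-\frac13+o(m)}$. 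The binding case $\ell=4$ of this estimate is precisely the energy bound of \cite{HRSZ25}; the larger values of $\ell$ are afforded with room to spare, which is why reproving the $\ell=4$ case suffices and no genuine improvement over \cite{HRSZ25} is needed here.

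\smallskip

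The crux is the per-block estimate. Dividing a non-degenerate $\ell$-term relation through by one of its terms produces an $(\ell-1)$-term equation $\sum_i\varepsilon_i x_i=1$ whose solutions $x_i$ range over the ratio set $R=A/A$; because $\omega(n)\leq k$ on $A$, each $x_i$ is supported on at most $2k$ primes, so the whole equation is supported on at most $O(k\ell)$ primes. For a \emph{fixed} support $S$ the subspace theorem (Evertse, Schlickewei, Schmidt) bounds the number of non-degenerate solutions by a constant depending only on $\ell$ and $\abs S=O(k\ell)$, and each such solution lifts to at most $\abs A$ tuples over $A$. The obstacle — and the point where this argument diverges from a direct application of $S$-unit bounds, and dispenses with the appeal to Chang's lemma and a martingale argument made in \cite{HRSZ25} — is that the pool of primes dividing elements of $A$ is unbounded, so one cannot afford to sum the per-$S$ bound over all admissible supports. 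The remedy is to couple the per-$S$ subspace bound with H\"older's inequality: one stratifies the count of non-degenerate solutions so as to isolate the (bounded) number of primes actually participating, applies H\"older to interpolate between trivial $\ell^1/\ell^\infty$ control and that stratification, and inserts the subspace bound only at the interpolated exponent. The smallness of $\abs{AA}$ — through the bounds it provides, via Pl\"unnecke--Ruzsa in $\bbq^\times$, on $\abs R$ and on the multiplicative energy of $A$ — is exactly what makes this interpolation close with the exponent $\frac23\ell-\frac13$.

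\smallskip

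Finally one assembles everything: the per-block estimates plug into the bound for $T_m$, which with $\abs{mA}\geq\abs A^{2m}/T_m$ settles the case $\abs{A^m}$ small, the other case being immediate. It remains to check that the losses accumulated along the way — the constant-factor reductions, the $\abs A^{o(m)}$ counts of partitions and combinatorial rearrangements, the constants from the subspace theorem, the $\omega$-dependence, and the iterated Pl\"unnecke inequalities — really amount to only $o(m)$ in the exponent; this holds in the same range $k=o(\log\abs A/\log\log\abs A)$ as in \cite{HRSZ25}, and is the source of that restriction on $k$ (and of the $o(m)$ in the statement). I expect the H\"older-amplification step of the previous paragraph to be the main difficulty: the subspace-theorem bounds are useless applied to the group generated by all the primes in play (their dependence on the rank being at best exponential), so the whole weight of the proof falls on organising the energy count so that H\"older, fed by the small product set, reduces matters to boundedly many primes at a time.
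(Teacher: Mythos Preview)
Your proposal has a genuine gap at the point where you write ``suppose $\abs{A^m}$, and hence $\abs{AA}\leq\abs{A^m}$, is small'' and then build the per-block estimate $N_\ell(A)\leq\abs{A}^{\frac{2}{3}\ell-\frac{1}{3}+o(1)}$ on the smallness of $\abs{AA}$. The hypothesis $\abs{A^m}\leq\abs{A}^{\frac{2}{3}m+\frac{1}{3}}$ gives only $\abs{AA}\leq\abs{A}^{\frac{2}{3}m+\frac{1}{3}}$, which for $m\geq 3$ is weaker than the trivial bound and says nothing. There is no Pl\"unnecke--Ruzsa inequality running in the direction you need: small $\abs{A^m}$ does not force small $\abs{AA}$, since the ratios $\abs{A^{i+1}}/\abs{A^i}$ along the product chain can be highly non-uniform (indeed Pl\"unnecke gives the \emph{lower} bound $\abs{AA}\geq\abs{A}(\abs{A^m}/\abs{A})^{1/(m-1)}$, not an upper bound). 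To get the exponent $\tfrac{2}{3}\ell-\tfrac{1}{3}$ in your per-block estimate you would need $A$ to be $M$-covered by a low-rank group with $M\approx\abs{A}^{2/3}$, and that requires a doubling-type bound $\abs{AA}\lesssim\abs{A}^{5/3}$ which you simply do not have for $m>2$.

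The paper fixes this with a step you are missing: pigeonhole on the chain $\abs{A},\abs{A^2},\ldots,\abs{A^m}$ to find some $1\leq i<m$ with $\abs{A\cdot A^i}\leq\abs{A}^{2/3}\abs{A^i}$, and then apply an \emph{asymmetric} version of the covering argument (Proposition~\ref{prop-small}) with $B=A^i$, noting that $\omega(n)\leq mk$ on $B$. This places a large $A'\subseteq A$ inside $\approx\abs{A}^{2/3}$ cosets of a rank-$O(k)$ group. After that, the bound $E_{2m}(A')\ll\abs{A'}^{\frac{4}{3}m-\frac{1}{3}+o(1)}$ follows directly from the $S$-unit bound amplified by H\"older (Lemma~\ref{lem-energy}), and your block decomposition becomes unnecessary: the degenerate sub-sums are handled inside that lemma. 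Your intuition about H\"older amplification replacing the Chang/martingale machinery is exactly right and matches the paper, but the decisive idea that produces the exponent $2/3$ is the pigeonhole on the product chain together with the asymmetric covering, not any direct control of $\abs{AA}$.
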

As above, these results are valid up to $k=o(\frac{\log \abs{A}}{\log\log \abs{A}})$. A full precise quantitative statement is given in Theorem~\ref{th-main2precise}. Note that the case $m=2$ recovers Theorem~\ref{th-old} of Hanson, Rudnev, Shkredov, and Zhelezov above. By incorporating some of the ideas in the proof of Theorem~\ref{th-main} the constant term $1/3$ can be improved slightly.

Our third main result concerns lower bounds for $\abs{A+AA}$. As a single quantity which combines both addition and multiplication, a folklore conjecture is that this should be at least $\abs{A}^{2-o(1)}$ for any finite set $A\subset \bbr$. For sets of integers (or, in general, $1$-separated sets of reals) a simple proof of $\abs{A+AA}\geq \abs{A}^2$ was given by Shakan \cite{Sh}. The best-known lower bound for arbitrary finite $A\subset \bbr$ is $\abs{A+AA}\geq \abs{A}^{3/2+c}$ for some (constant but small) $c>0$, due to Roche-Newton, Ruzsa, Shen, and Shkredov \cite{RRSS19}, with a similar result for the corresponding `energy' given in \cite[Theorem 9]{RuSh22}. The following result gives a bound of comparable quality for finite sets of rationals, under a similar `few primes' assumption, and also characterises nearly extremal sets. Such a `stability result' is not known for arbitrary subsets of integers.

\begin{theorem}\label{th-main3}
Let $A\subset \bbq$ be a finite set of rationals and let $k\geq 1$ be a fixed integer. If $\omega(a)+\omega(b)\leq k$ for all $a/b\in A$ with $(a,b)=1$ then
\[\abs{A+AA}\geq \abs{A}^{2-o(1)}.\]
Furthermore, if $\abs{A+AA}\leq M\abs{A}^{2}$ then
\[\max(\abs{AA},\abs{A+A})\geq M^{-O(1)}\abs{A}^{2-o(1)}.\]
\end{theorem}
As above, these results are valid up to $k=o(\frac{\log \abs{A}}{\log\log \abs{A}})$. A full precise quantitative statement is given in  Corollary~\ref{cor-main3precise}. Some loss of $\abs{A}^{o(1)}$ is necessary here, as Roche-Newton, Ruzsa, Shen, and Shkredov \cite{RRSS19} have constructed a finite $A\subset \bbq$ such that
\[\abs{A+AA}\leq \frac{\abs{A}^2}{(\log \abs{A})^{c}}\]
for some constant $c>0$.

Hanson, Rudnev, Shkredov, and Zhelezov \cite{HRSZ25} have noted that their methods yield a result similar to Theorem~\ref{th-main3} for the set $AA+AA$. One may deduce this from Theorem~\ref{th-main3} because all of $|AA+AA|, |A+A|, |AA|$ are dilation invariant and we may therefore suppose, without loss of generality, that $1 \in A$, whence $A+AA\subseteq AA+AA$.

After reviewing basic definitions, in Section~\ref{sec-sketch} we will sketch the main ideas of the proofs. In Section~\ref{sec-comb} we will transfer the property of having few prime factors to being efficiently covered by a multiplicative group (in a similar fashion as \cite{HRSZ25}) and finally in Sections~\ref{sec-add} and \ref{sec-add2} we will show how being efficiently covered by a multiplicative group leads to strong lower bounds for the sumset.

\subsection*{Acknowledgements} TB is supported by a Royal Society University Research Fellowship. GP is supported by the Simons Foundation grant MPS-TSM-00007816. This material is based upon work supported by the National Science Foundation under Grant No. 2054214. We thank Ilya Shkredov for bringing our attention to some existing work of his on higher energies, which led to an improved final exponent. We would also like to thank Akshat Mudgal for useful feedback on an early draft of this paper and Misha Rudnev for generously sharing his insight on the topic.

\section{Definitions}\label{sec-defs}
All sets considered in this paper will be finite sets of some fixed abelian group (usually either $\bbc$, $\bbq$, or $\bbz$). We write $1_A$ for the indicator function of $A$, and define the convolution and difference convolution by
\[1_A\ast 1_B(x)=\sum_{b\in B}1_A(x-b)\textrm{ and }1_A\circ 1_B(x)=\sum_{b\in B}1_A(x+b).\]
We consider $L^p$ norms with the counting measure, so that for any $p\geq 1$
\[\norm{f}_p^p=\sum_x \lvert f(x)\rvert^p\textrm{ and }\norm{f}_\infty=\max\lvert f(x)\rvert.\]
We will only ever consider functions with finite support, so we are free from worries about convergence. When we work in a finite group $G$, the expectation of a function $f: G \to \bbc$ is taken to be \[\bbe_x f(x) = \frac1{\abs{G}} \sum_{x \in G} f(x).\]
The sum set and product set of $A$ and $B$ are defined by
\[A+B = \{ a+b : a\in A\textrm{ and }b\in B\}\quad\textrm{ and }\quad AB = \{ ab : a\in A\textrm{ and }b\in B\}\]
respectively and similarly 
\[ A+AA = \{ a_1 + a_2a_3 : a_1, a_2, a_3 \in A\}.\]
For any $m\geq 1$ the iterated sum and product sets of $A$ are defined by
\[mA = \{a_1+\cdots+a_m : a_1,\ldots,a_m\in A\}\]
and
\[A^{(m)} = \{a_1\cdots a_m : a_1,\ldots,a_m\in A\}.\]
The additive energy between $A$ and $B$ is defined by
\[E(A,B) = \norm{1_A\circ 1_B}_2^2=\#\{ (a_1,a_2,b_1,b_2)\in A\times A\times B\times B : a_1-a_2=b_1-b_2\}.\]
We write $E(A)=E(A,A)$. The higher additive energies are defined, for any $m\geq 1$, by
\[E_{2m}(A) = \sum_{x}1_A^{(m)}(x)^2 = \#\{ (a_1,\ldots,a_{2m})\in A^{2m}: a_1+\cdots +a_m=a_{m+1}+\cdots+a_{2m}\},\]
where $1_A^{(m)}=1_A\ast \cdots \ast 1_A$ is the $m$-fold iterated convolution. We note here that $E(A)=E_4(A)$ and $E_2(A)=\abs{A}$. The fundamental link between additive energies and sum sets is provided by H\"{o}lder's inequality which implies, for any $m\geq 1$,
\begin{equation}\label{eq-holderenergy}
\Abs{mA}E_{2m}(A)\geq \abs{A}^{2m}.
\end{equation}

For any prime $p$ and integer $n\in \bbz\backslash\{0\}$ we write $\nu_p(n)$ for the $p$-adic valuation of $n$, so that $\nu_p(n)=k$ if $p^k\mid n$ and $p^{k+1}\nmid n$. For any integer $n\in \bbz\backslash\{0\}$ the arithmetic function $\omega(n)$ counts the number of distinct prime divisors of $n$ -- that is, the number of $p$ such that $\nu_p(n)\neq 0$.

It is convenient to extend these definitions to $\bbq\backslash\{0\}$. To that end, for any $x\in \bbq\backslash\{0\}$ and prime $p$, if $x=a/b$ then $\nu_p(x)=\nu_p(a)-\nu_p(b)$, and $\omega(x)$ counts the number of distinct primes appearing in $x$ when written in reduced form -- that is, the number of $p$ such that $\nu_p(x)\neq 0$. Similarly, it is convenient to slightly abuse notation and write $p\mid x$ if $\nu_p(x)\neq 0$. We will write $P(x)$ for the set of such primes, so that, for example, $\omega(x)=\abs{P(x)}$. 

If $S$ is any finite set of primes then $\bbq_S\subset \bbq$ is the set of non-zero rationals in which the only primes are those from $S$ -- that is,
\[\bbq_S = \left\{x\in \bbq\backslash\{0\} : p\mid x\implies p\in S\right\}.\]

The rank of a multiplicative group $\Gamma\leq \bbc^\times$ is the smallest $r$ such that there exist $\gamma_1,\ldots,\gamma_r\in \Gamma$ which generate $\Gamma$. Note in particular that $\bbq_S$ is a multiplicative subgroup of $\bbq^\times$ with rank $\abs{S}$.

The general theme of this paper is that sets which are efficiently covered by a bounded rank multiplicative group lack additive structure. To make this precise, the following definition is convenient.

\begin{definition}
A finite set $A\subset \bbc^\times$ is $M$-covered by a rank $r$ multiplicative group if there exists a multiplicative group $\Gamma\subset \bbc^\times$ of rank $r$ and a finite set $B$ of size $\abs{B}\leq M$ such that $A\subseteq \Gamma \cdot B$.
\end{definition}

\section{Sketch of the proofs}\label{sec-sketch}

In this section we will give a sketch of the proofs of our main results. For simplicity, we will work with a fixed set of integers $A$ in which $\omega(n) \ll 1$ for every $n\in A$. Let $M=\abs{AA}/\abs{A}$ and $K=\abs{A+A}/\abs{A}$. For the purposes of this sketch we will not bother to record factors of the shape $\abs{A}^{o(1)}$, including them in the $\ll$ notation.

The argument of \cite{HRSZ25} can be summarised as follows.
\begin{enumerate}
\item By a combinatorial argument there exists a set of primes $S$ with $\abs{S}\ll 1$ and a large subset $A'\subseteq A$ such that $A'$ is contained in $O(M)$ many dilates of $\bbq_S$, say $A'\subseteq \bbq_S\cdot B$. Note that in particular we can write $A'$ as the disjoint union of dilates $c\cdot B_c$ for some $c\in \bbq_S$ and $B_c\subseteq B$.
\item Generalising a lemma due to Chang \cite{Ch03}, using ideas from martingale theory, they prove that
\[E(A) \ll \sum_{c_1,c_2\in \bbq_S}E(c_1B_{c_1},c_2B_{c_2}).\]
\item Importing a deep quantitative bound from the theory of $S$-unit equations they prove that the right-hand side is
\[\ll \abs{A}^2+\abs{A}\abs{B}^2.\]
\item It follows that
\[E(A) \ll \abs{A}^2+\abs{A}M^2,\]
which implies (via $E(A) \geq \abs{A}^3/K$) the bound $\max(K,M)\gg \abs{A}^{2/3}$.
\end{enumerate}

The first part of our argument is the same as that in \cite{HRSZ25}: we pass to a large subset $A'\subseteq A$ which is contained in $O(M)$ many dilates of $\bbq_S$, which is a rank $O(1)$-multiplicative group. This part of the argument is essentially unchanged from \cite{HRSZ25}, although we take the opportunity to streamline the presentation.

Rather than then taking a detour through martingale theory however, we will seek to apply the bounds from $S$-unit theory directly. Roughly speaking, these imply that, if $\Gamma$ is a fixed multiplicative group of rank $r=O(1)$ and $a_0,\ldots,a_m\in \bbc^\times$ are fixed, the number of solutions to
\[a_1z_1+\cdots+a_mz_m=a_0\]
which are non-degenerate (in that no subsum on the left-hand side vanishes) is $O_{m,r}(1)$. 

Applying this bound with $m=3$ and taking the sum of such bounds over all $M$ relevant dilates of $\bbq_S$ this immediately implies that
\[E(A') \ll \abs{A}^2+\abs{A}M^3.\]
This alone only implies $\max(K,M) \gg \abs{A}^{1/2}$. We can amplify this bound, however, using the fact that, by H\"{o}lder's inequality,
\[E(A') \leq \abs{A}E_{2m}(A')^{\frac{1}{m-1}}\]
as $m\to \infty$. The $S$-unit bound yields $E_{2m}(A')\ll_m \abs{A}^m+\abs{A}M^{2m-1}$, whence we immediately get the improved bound
\[E(A') \ll \abs{A}^2+\abs{A}M^{\frac{2m-1}{m-1}}=\abs{A}^2+\abs{A}M^{2+o(1)}.\]
This recovers the energy bound of \cite{HRSZ25} cited above, but without the detour through a Chang-type lemma.

We now turn to the proofs of our new results, Theorems~\ref{th-main}, \ref{th-main2}, and \ref{th-main3}. We first discuss Theorem~\ref{th-main2}. Suppose that $m$ is large and $\Abs{A^{(m)}}\leq \abs{A}^{\frac{2}{3}m+\frac{1}{3}}$. By the pigeonhole principle there exists some $1\leq i<m$ such that if $B=A^{(i)}$ then $\abs{AB}\leq \abs{A}^{\frac{2}{3}}\abs{B}$. Performing an asymmetric generalisation of the combinatorial decomposition mentioned above we may use this to find some large subset $A'\subseteq A$ which is contained in $O(\abs{A}^{2/3})$ many dilates of a multiplicative group of rank $O(1)$. 

As above, the bounds from the theory of $S$-unit equations coupled with H\"{o}lder's inequality yield that
\[E_{2m}(A') \ll \abs{A}^m+\abs{A}(\abs{A}^{2/3})^{2m-2}\ll\abs{A}^{\frac{4}{3}m-\frac{1}{3}},\]
whence
\[\abs{mA}\geq \abs{mA'}\gg \abs{A}^{2m-\frac{4}{3}m+\frac{1}{3}}=\abs{A}^{\frac{2}{3}m+\frac{1}{3}}\]
as required.

Next we sketch the proof of Theorem~\ref{th-main3}, the lower bound on $\abs{A+AA}$. By applying H\"older's inequality we obtain that, for all large $m$,
\[ E(A', AA) \le E_{2m}(A)^{1/m} |AA|^{1+o(1)}\]
for a suitable subset $A' \subseteq A$ of size $\abs{A}^{1-o(1)}$ as above. An application of the aforementioned bounds on $E_{2m}(A')$ implies
\[ E(A', AA) \le (\abs{A} + M^2) |AA|^{1+o(1)}.\] An application of the Cauchy-Schwarz inequality yields
\[ |A'+AA| \ge \min\left\{ \abs{A} \abs{AA}^{1-o(1)} , \frac{\abs{A}^4}{\abs{AA}^{1+o(1)}}\right\}.\]
This immediately implies 
\[ \abs{A+AA} \ge \abs{A'+AA} \ge \abs{A}^{2-o(1)},\]
with near equality either when $\abs{AA} = \abs{A}^{1+o(1)}$ or when $\abs{AA} = \abs{A}^{2-o(1)}$. To complete the characterization of nearly extremal sets, we note that in the former case the bound on $E(A)$ from \cite{HRSZ25} immediately implies that $\abs{A'+A'} = \abs{A}^{2-o(1)}$.

Finally, we turn to the headline result of how to obtain an exponent for 
\[\max(\abs{A+A},\abs{AA})\]
larger than $5/3$. This is a more delicate matter, since as mentioned earlier the inequality
\[\max(\abs{AA},\abs{A}^4E(A')^{-1})\gg \abs{A}^{5/3}\]
for some large $A'\subseteq A$ proved by the above method of \cite{HRSZ25} cannot be improved. This follows by the following example of Balog and Wooley \cite{BaWo17}.

Let $P=\{1,\ldots,M\}$ and $G=\{M,M^2,\ldots,M^N\}$ and let $A=PG$. It is easy to check (certainly if $M$ is a large prime for example) that $\abs{A}\approx MN$. On one hand,
\[\abs{AA}\leq \abs{P}^2\abs{GG}\ll M^2N\ll M\abs{A}.\]
On the other hand, if $A'\subseteq A$ is large then $A'$ must densely intersect many dilates of $P$ -- that is, $\abs{A'\cap (gP)}\gg M$ for $\gg N$ many $g\in G$. Note that
\[E(A'\cap (gP))\gg \frac{M^4}{\abs{gP+gP}}\gg M^3\]
and hence, summing this over $\gg N$ many $g\in G$, we have $E(A')\gg NM^3\gg \abs{A}^4/N^3M$. Choosing $M\approx\abs{A}^{2/3}$ and $N\approx \abs{A}^{1/3}$ we see that, for any $A'\subseteq A$ with $\abs{A'}\gg \abs{A}$,
\[\max(\abs{AA}, \abs{A}^4E(A')^{-1})\ll \abs{A}^{5/3}\]
and hence the bound of \cite{HRSZ25} is sharp.

To go beyond the exponent of $5/3$ we therefore need to find a method that bounds, not the additive energy, but the sumset directly. This is provided by the toolkit of higher additive energies (which has been extensively developed by Shkredov in particular in a series of papers. The three ingredients required are:
\begin{enumerate}
\item a good upper bound for the higher energies $E_{4\ell}(A)$ as $\ell\to \infty$,
\item a good upper bound for $\sum_{x\neq 0}1_A\circ 1_A(x)^4$ (which we accomplish by giving a good pointwise upper bound for $1_A\circ 1_A(x)$ at $x\neq 0$ and using the previous bound for $E(A)$), and 
\item a good lower bound for $K=\abs{A+A}/\abs{A}$ in terms of $E_{4\ell}(A)$ and $\sum_{x\neq 0} 1_A\circ 1_A(x)^4$.
\end{enumerate}

For the third we may use a lemma inspired by similar bounds due to Shkredov (see Lemma~\ref{lem-pop} below), which in particular implies that, for any $\ell \geq 2$,
\begin{equation}\label{eq-sh}
\abs{A}^{9}\leq K^3E_{4\ell}(A)^{\frac{1}{\ell-1}}\brac{K^3\abs{A}^{-1}E_{4\ell}(A)^{\frac{1}{\ell-1}}+\sum_{x\neq 0}1_A\circ 1_A(x)^4}.
\end{equation}
For the first we note that the $S$-unit equation bound coupled with H\"{o}lder's inequality yields, as above,
\[E_{4\ell}(A)^{\frac{1}{\ell-1}} \ll \abs{A}^{2+O(1/\ell)}+\abs{A}^{O(1/\ell)}M^4.\]
Finally, to bound $1_A\circ 1_A(x)$ for fixed $x\neq 0$ we will also use the $S$-unit equation bound. A naive direct application of this bound implies that, if $A$ is contain in $M$ many dilates of a multiplicative group of rank $O(1)$, then 
\[1_A\circ 1_A(x)\ll M^{2}\]
for any $x\neq 0$. This is too weak for our purposes, and hence (as with our bounds for additive energy) we need to amplify the naive argument for employing the $S$-unit equation bound, this time by employing a graph theoretic argument that is a generalisation of an argument of Roche-Newton and Zhelezov \cite{RoZh15}. This yields
\[1_A\circ 1_A(x)\ll M\]
for any $x\neq 0$, and hence, coupled with our bounds on $E(A)$ above,
\[\sum_{x\neq 0}1_A\circ 1_A(x)^4 \ll M^2\brac{\abs{A}^2+\abs{A}M^2}\ll M^2\abs{A}^2+M^4\abs{A}.\]
Employing these bounds in \eqref{eq-sh} above yields, if $L=\max(K,M)$,
\[\abs{A}^9 \ll L^{14}\abs{A}^{-1}+ L^{11}\abs{A}+L^9\abs{A}^2+L^7\abs{A}^3+L^5\abs{A}^4\]
and hence
\[\max(K,M)=L\gg \abs{A}^{5/7}\]
whence
\[\max(\abs{A+A},\abs{AA})\gg \abs{A}^{12/7}\]
as desired.
\section{Finding a subset with small multiplicative dimension}\label{sec-comb}
Following the proof of \cite{HRSZ25}, the first stage is to show that $\omega(n)\ll 1$ for all $n\in A$ implies that $A$ is contained in few cosets of a multiplicative group of bounded rank. In this section we give a variant of their argument suitable for our purposes -- the essential ideas of the proofs are the same, but we take the opportunity to streamline the proofs and give an asymmetric version that will be required for our application to many sums and products. In fact, for possible future applications, we will give two paths (both present in \cite{HRSZ25}), with different quantitative strengths. 

The first step is to show that if $\omega(n)\ll 1$ for all $n\in A$ then there is a set $S$ of $O(1)$ many primes such that a large proportion of pairs $(a_1,a_2)\in A$ have only primes from $S$ in common.

\begin{lemma}\label{lem-warmup1}
Let $A,B\subset \bbq\backslash\{0\}$ be finite sets such that $\omega(n)\leq k$ for all $n\in A$ and $\omega(n)\leq \ell$ for all $n\in B$. The following both hold:
\begin{enumerate}
\item There is a set of primes $S$ with $\abs{S}\leq 2k\ell$ such that
\[\sum_{a\in A}\sum_{b\in B}1_{P(a)\cap P(b)\subseteq S}\geq \frac{1}{2}\abs{A}\abs{B}.\]
\item There is a set of primes $S$ with $\abs{S}\leq k$ and $A'\subseteq A$ of size 
\[\abs{A'}\geq (2\ell)^{-k}\abs{A}\]
such that
\[\sum_{a\in A'}\sum_{b\in B}1_{P(a)\cap P(b)\subseteq S}\geq \frac{1}{2}\abs{A'}\abs{B}.\]
\end{enumerate}
\end{lemma}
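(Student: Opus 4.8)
The plan is to extract $S$ by a greedy "heavy prime" argument: repeatedly pick a prime that divides a large proportion of elements of $B$ (or of $A$, depending on the part), add it to $S$, and iterate. Each element of $A$ has at most $k$ primes and each element of $B$ at most $\ell$ primes, so a pair $(a,b)$ can only fail the condition $P(a)\cap P(b)\subseteq S$ if some prime $p \notin S$ divides both $a$ and $b$; since $a$ has at most $k$ choices for such a shared prime, it suffices to control, for each prime $p\notin S$, the number of pairs sharing $p$, and then sum over at most $k$ bad primes per $a$.

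For part (1), first I would look at $A$: a prime $p$ is declared \emph{heavy for $A$} if $p \mid a$ for at least $\frac{1}{2k\ell}\abs{A}$ many $a\in A$, and dually \emph{heavy for $B$} if $p\mid b$ for at least $\frac{1}{2k\ell}\abs{B}$ many $b\in B$. Since $\sum_{p}\#\{a\in A: p\mid a\} = \sum_{a\in A}\omega(a)\leq k\abs{A}$, there are at most $2k^2\ell$ primes heavy for $A$, and similarly at most $2k\ell^2$ heavy for $B$ — so naively this overshoots the bound $2k\ell$. The fix is to count more carefully: I would instead bound the number of \emph{bad pairs} $(a,b)$ with some common prime outside $S$. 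Take $S$ to be all primes $p$ with $\#\{a: p\mid a\}\cdot\#\{b: p\mid b\} \geq \frac{1}{2k\ell}\abs{A}\abs{B}$ — wait, this still needs the right threshold. Cleaner: let $S=\{p : \#\{(a,b)\in A\times B: p\mid a,\ p\mid b\} \geq \tfrac{1}{2k}\abs{A}\abs{B}\,\}$. Since a bad pair $(a,b)$ has a shared prime $p\mid\gcd$, and $a$ contributes such $p$ to at most $k$ values, $\sum_p \#\{(a,b): p\mid a,\ p\mid b\} \leq k\abs{A}\abs{B}$, so $\abs{S}\leq 2k$; but I want $\abs{S}\leq 2k\ell$, which is weaker, so this is fine — and then the number of pairs with a shared prime outside $S$ is at most $k\cdot \frac{1}{2k}\abs{A}\abs{B}=\frac12\abs{A}\abs{B}$ (summing the at-most-$k$ light primes per $a$, each contributing fewer than $\frac{1}{2k}\abs{A}\abs{B}$ pairs — here I must be slightly careful to fix $a$ first and sum over its $\leq k$ primes). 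This gives the claimed inequality; the bound $\abs{S}\leq 2k\ell$ is then generous.

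For part (2), the point is to get $\abs{S}\leq k$ by first passing to a subset $A'$ on which \emph{every} element uses only primes from a common small set. I would build $S$ greedily from $B$'s side: while there is a prime $p\notin S$ with $p\mid b$ for at least $\frac{1}{2\ell}\abs{B}$ many $b\in B$, add $p$ to $S$; this terminates with $\abs{S}\leq$ (total prime-incidences of $B$)$/(\tfrac{1}{2\ell}\abs{B}) \leq \ell\abs{B}/(\tfrac{1}{2\ell}\abs{B}) = 2\ell^2$ — again too big. Instead: for each $a\in A$ consider its prime set $P(a)$, of size $\leq k$; group elements of $A$ by which subset of the \emph{$B$-heavy} primes they "could" interact with — more precisely, order primes and let $A'$ be the elements $a$ such that every prime in $P(a)$ that is also shared with $\geq\frac{1}{2\ell}\abs{B}$ elements of $B$ lies in one fixed size-$\leq k$ set $S$; by pigeonholing over the $\leq \binom{(\#\text{heavy primes})}{\leq k}$ choices, or more simply by a dyadic/greedy selection, one loses a factor $(2\ell)^{-k}$ in the size of $A$, giving $\abs{A'}\geq (2\ell)^{-k}\abs{A}$. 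On $A'$, the only shared primes with $B$ outside $S$ are $B$-light primes, each shared with $<\frac{1}{2\ell}\abs{B}$ elements of $B$; since each $a\in A'$ has $\leq \ell$ such primes — no, $\leq k$ such primes, but we sum over $b$: each light prime $p$ dividing some $a$ contributes $<\frac{1}{2\ell}\abs{B}$ bad $b$'s, and... here I'd use that each $b$ has $\leq \ell$ primes, so the count of bad pairs is $\leq \ell\cdot\frac{1}{2\ell}\abs{A'}\abs{B} = \frac12\abs{A'}\abs{B}$.

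The main obstacle is book-keeping the thresholds so that the size loss, the bound on $\abs{S}$, and the $\frac12$ in the pair-count all come out simultaneously; the two parts trade off $\abs{S}$ against $\abs{A'}$, and getting $\abs{S}\leq k$ exactly (rather than $O(k\ell)$) in part (2) forces the subset-selection step and hence the $(2\ell)^{-k}$ loss. I expect the cleanest writeup fixes $a$ (resp. $b$) first, sums over its $\leq k$ (resp. $\leq \ell$) primes, and chooses the heaviness threshold to be exactly the reciprocal of the relevant bound.
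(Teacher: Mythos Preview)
Your overall plan---define $S$ as a set of ``heavy'' primes, then bound the bad pairs by summing prime incidences---is the right one, but the thresholds you actually propose do not close the argument in either part.

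For part~(1), your choice $S=\{p:\#\{(a,b):p\mid a,\,p\mid b\}\geq\tfrac{1}{2k}\lvert A\rvert\lvert B\rvert\}$ gives $\lvert S\rvert\leq 2k\min(k,\ell)$ (not $2k$, since the total incidence sum is $\leq \min(k,\ell)\lvert A\rvert\lvert B\rvert$), which is fine; but it does \emph{not} control the bad-pair count. The sum $\sum_{p\notin S}\#\{(a,b):p\mid a,\,p\mid b\}$ ranges over arbitrarily many primes, each contributing $<\tfrac{1}{2k}\lvert A\rvert\lvert B\rvert$, and there is no ``at most $k$ primes per $a$'' to save you here: that per-prime bound is global, not per-$a$, so the multiplication you wrote is illegitimate. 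The paper's fix is to threshold \emph{one-sidedly}: take $S=\{p:\#\{a\in A:p\mid a\}\geq \lvert A\rvert/(2\ell)\}$. Then $\lvert S\rvert\leq 2k\ell$ since $\sum_p\#\{a:p\mid a\}\leq k\lvert A\rvert$, and crucially
\[
\sum_{p\notin S}\#\{a:p\mid a\}\,\#\{b:p\mid b\}<\frac{\lvert A\rvert}{2\ell}\sum_{p}\#\{b:p\mid b\}\leq\frac{\lvert A\rvert}{2\ell}\cdot\ell\lvert B\rvert=\tfrac12\lvert A\rvert\lvert B\rvert.
\]
The asymmetry---threshold on the $A$-count, then sum $\omega(b)\leq\ell$ on the $B$-side---is precisely what makes the constants match. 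Your closing sentence gestures at this, but your main attempt does not implement it.

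For part~(2), your pigeonhole over subsets of the $B$-heavy primes loses far more than $(2\ell)^{k}$: with threshold $\lvert B\rvert/(2\ell)$ there can be up to $2\ell^2$ heavy primes, so you lose a factor of order $\binom{2\ell^2}{k}$, not $(2\ell)^k$. Worse, on the resulting $A'$ your bad-pair count comes out to $\tfrac{k}{2\ell}\lvert A'\rvert\lvert B\rvert$ rather than $\tfrac12\lvert A'\rvert\lvert B\rvert$, since the light-prime bound is on $\#\{b:p\mid b\}$ and you must then sum $\omega(a)\leq k$ on the $A$-side. The paper instead runs the greedy refinement on the \emph{$A$-side}: take $S$ maximal among sets of primes for which there exists $A'\subseteq A$ with $\lvert A'\rvert\geq\lvert A\rvert/(2\ell)^{\lvert S\rvert}$ and every $p\in S$ dividing every $a\in A'$. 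Since each $a\in A'$ has $\omega(a)\leq k$ this forces $\lvert S\rvert\leq k$; and maximality gives $\#\{a\in A':p\mid a\}<\lvert A'\rvert/(2\ell)$ for every $p\notin S$, after which the same one-sided sum as in part~(1) finishes. Your parenthetical ``greedy selection'' is close to this, but the point you are missing is that both the refinement and the residual lightness condition must live on the $A$-side for the bounds $\lvert S\rvert\leq k$ and $\lvert A'\rvert\geq(2\ell)^{-k}\lvert A\rvert$ to emerge simultaneously.
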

\begin{proof}
For the first part, let $S$ be the set of all primes $p$ such that $\sum_{n\in A}1_{p\mid n}\geq \abs{A}/2\ell$. On one hand,
\[\frac{\abs{A}}{2\ell}\abs{S}\leq \sum_{p\in S}\sum_{n\in A}1_{p\mid n}= \sum_{n\in A}\sum_{p\in S} 1_{p\mid n}\leq k\abs{A},\]
and hence $\abs{S}\leq 2k\ell$. On the other hand, for any prime $p\not\in S$,
\[\sum_{a\in A}\sum_{b\in B}1_{p\in P(a)\cap P(b)}=\brac{\sum_{a\in A}1_{p\mid a}}\brac{\sum_{b\in B}1_{p\mid b}}<\frac{\abs{A}}{2\ell}\sum_{b\in B}1_{p\mid b}.\]
It follows that
\[\sum_{a\in A}\sum_{b\in B}1_{P(a)\cap P(b)\not\subseteq S}\leq \sum_{p\not\in S}\sum_{a\in A}\sum_{b\in B}1_{p\in P(a)\cap P(b)}
<\frac{\abs{A}}{2\ell}\sum_{b\in B}\sum_{p\not\in S}1_{p\mid b}\leq \tfrac{1}{2}\abs{A}\abs{B},\]
and the conclusion follows.

For the second part, let $S$ be a maximal set of primes such that, with $\abs{S}=r$, there is a subset $A'\subseteq A$ of size $\abs{A'}\geq \abs{A}/(2\ell)^r$ with $p\mid n$ for all $n\in A'$ and $p\in S$. (Note that such a maximal $S$ certainly exists, since $S=\emptyset$ satisfies the conditions and $\abs{S}\leq k$ for any such $S$ by assumption.)

By maximality of $S$, for any prime $p\not\in S$,
\[\sum_{a\in A'}\sum_{b\in B}1_{p\in P(a)\cap P(b)}=\brac{\sum_{a\in A'}1_{p\mid a}}\sum_{b\in B}1_{p\mid b}< \frac{\abs{A'}}{2\ell}\sum_{b\in B}1_{p\mid b},\]
and the rest of the proof proceeds as above.
\end{proof}

Note that, in statements such as Lemma~\ref{lem-warmup1}, we allow for the possibility that $S=\emptyset$ (which indeed is necessary in some situations, such as when elements of $A$ and $B$ share no primes in common). 

Secondly, we show that if there are many pairs in $A\times A$ who only share primes from $S$ then either $AA$ is large or $A$ is contained in few cosets of $\bbq_S$. The key idea (from \cite{HRSZ25}) is the observation that if $a$ and $b$ are coprime integers, each with $O(1)$ prime factors, then $a$ and $b$ can be recovered from knowledge of $ab$ at a cost of $O(1)$ -- simply by fixing which primes present in $ab$ belong in $a$ or $b$. This implies, by a simple counting argument, that if there are $\gg \abs{A}^2$ many pairs $a,b\in A$ with $(a,b)=1$ (and $\omega(n)\ll 1$ for all $n\in A$) then $\abs{AA}\gg \abs{A}^2$. The precise argument below is a quantitative form of this, after `factoring out' by $\bbq_S$.

\begin{lemma}\label{lem-warmup2}
Let $A,B\subset \bbq\backslash \{0\}$ be finite sets such that $\omega(n)\leq k$ for all $n\in A$ and $\omega(n)\leq \ell$ for all $n\in B$. Suppose $S$ is a set of primes such that 
\[\sum_{a\in A}\sum_{b\in B}1_{P(a)\cap P(b)\subseteq S}\geq \frac{1}{2}\abs{A}\abs{B}.\]
There is a subset $A'\subseteq A$ of size $\abs{A'}\ge \abs{A}/4$ and a set $C$ such that 
\[\abs{C}\ll 2^{k+\ell}\frac{\abs{AB}}{\abs{B}}\]
and
\[A'\subseteq \bbq_S \cdot C.\]
\end{lemma}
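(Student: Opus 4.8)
The plan is to exploit the hypothesis that most pairs $(a,b) \in A \times B$ share only primes from $S$, so that after quotienting out by $\bbq_S$ the elements $a$ and $b$ become ``essentially coprime'' and can be reconstructed cheaply from their product $ab$. First I would use the averaging hypothesis to pass to a large subset $A' \subseteq A$ (say $\abs{A'} \geq \abs{A}/4$) and a large subset $B' \subseteq B$ such that for \emph{every} $a \in A'$ we have $1_{P(a)\cap P(b)\subseteq S} = 1$ for $\gg \abs{B}$ many $b \in B$; a routine double-counting / pigeonhole argument gives this. (One has to be a little careful about the order of quantifiers — one wants a subset $A'$ that works for many $b$, not a subset $B'$ that works for many $a$ — but this is standard.)

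Next, fix a splitting of each $n \in \bbq_S$ off its ``$S$-free part'': write any nonzero rational $x$ uniquely as $x = s(x)\cdot t(x)$ where $s(x) \in \bbq_S$ and $t(x)$ has no prime factors in $S$. I would then consider the map sending a pair $(a,b)$ with $P(a)\cap P(b)\subseteq S$ to the value $t(a)t(b)$. The key observation is that $t(a)$ and $t(b)$ are genuinely coprime (they share no primes, since $P(a)\cap P(b) \subseteq S$ kills the common primes and $t$ strips out $S$), and $\omega(t(a)) \leq \omega(a) \leq k$, $\omega(t(b)) \leq \omega(b) \leq \ell$. Hence from the product $t(a)t(b)$ the pair $(t(a), t(b))$ can be recovered up to a choice of how to distribute the at most $k+\ell$ distinct primes of the product — at most $2^{k+\ell}$ possibilities. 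Therefore the number of distinct values of $t(a)t(b)$ arising is at least $2^{-(k+\ell)}$ times the number of distinct pairs $(t(a),t(b))$.

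Now I would count in two ways. On one side, $t(a)t(b)$ is the $S$-free part of $ab \in AB$, so the number of distinct values of $t(a)t(b)$ is at most $\abs{AB}$. On the other side, for a \emph{fixed} $a \in A'$, as $b$ ranges over the $\gg \abs{B}$ good values the quantity $t(b)$ takes $\gg \abs{B}$ distinct values (distinct $b$ can only collapse under $t$ if they differ by a factor in $\bbq_S$, and one can arrange — by a further pigeonhole passing to a subset of $B$, or by noting $B \subseteq \bbz$ in the application — that this costs at most a constant; alternatively absorb it into the $\gg$). Combining, the number of distinct pairs $(t(a), t(b))$ with $a \in A'$ is $\gg \abs{A'}\abs{B}$, hence
\[
\abs{A'}\,\abs{B} \ll 2^{k+\ell}\,\abs{AB},
\]
which after passing again to a large subset — actually we already have $\abs{A'} \gg \abs{A}$, so this instead bounds the number of distinct values $t(a)$ for $a \in A'$. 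This last point is the crux: I would let $C$ be the set of values $t(a)$ occurring for $a \in A'$ (or rather, group the $a \in A'$ by their value of $t(a)$ and retain the $a$'s landing in the $\ll \abs{AB}/\abs{B}$ heaviest fibres, which still captures $\gg \abs{A}$ elements of $A$). Then $\abs{C} \ll 2^{k+\ell}\abs{AB}/\abs{B}$ by the displayed inequality, and every retained $a$ satisfies $a = s(a)\cdot t(a) \in \bbq_S \cdot C$, giving $A' \subseteq \bbq_S \cdot C$ as required.

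The main obstacle I anticipate is bookkeeping the various pigeonhole steps so that the subset $A'$ stays of size $\gg \abs{A}$ throughout while the set $C$ of $t$-values is simultaneously forced to be small: one is juggling a ``popular columns'' argument (fixing $a$, many good $b$) against a ``small image'' argument (few products $ab$), and the factor-reconstruction bound $2^{k+\ell}$ has to be inserted at exactly the right place. The number-theoretic heart — coprimality of $t(a),t(b)$ and the $2^{k+\ell}$ reconstruction cost — is the easy part; the combinatorial plumbing is where care is needed.
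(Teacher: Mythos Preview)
Your number-theoretic core --- splitting $x=s(x)t(x)$, noting $t(a),t(b)$ are coprime when $P(a)\cap P(b)\subseteq S$, and recovering $(t(a),t(b))$ from the product at cost $2^{k+\ell}$ --- is exactly right and matches the paper. The gap is in the combinatorics, precisely where you anticipated.

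The fatal step is the claim that ``as $b$ ranges over the $\gg\abs{B}$ good values the quantity $t(b)$ takes $\gg\abs{B}$ distinct values''. This is false in general: nothing prevents $B$ from lying in a single coset of $\bbq_S$ (say $B\subset\bbq_S$ itself), in which case $t(b)$ is constant and you only ever see one value. Your suggested fixes do not help --- neither ``$B\subseteq\bbz$'' nor ``further pigeonhole on $B$'' controls the fibres of $b\mapsto t(b)$, and the loss cannot be absorbed into a $\gg$. Consequently, counting \emph{distinct pairs} $(t(a),t(b))$ is too coarse: in the bad case you get only $\abs{C}\leq 2^{k+\ell}\abs{AB}$, with no factor of $\abs{B}$ in the denominator. (Your displayed inequality $\abs{A'}\abs{B}\ll 2^{k+\ell}\abs{AB}$ is also not what you want; even if it held, it bounds $\abs{A'}$, not $\abs{C}$.)

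The paper avoids this by never asking $b\mapsto t(b)$ to be injective. Instead it introduces a threshold $L$ on the fibre sizes of $a\mapsto t(a)$ and lets $A''\subseteq A$ be the union of the \emph{light} fibres (size $<L$). On $A''$, from a product $q=ab$ one recovers $t(a)$ at cost $2^{k+\ell}$, then the full $a$ at further cost $<L$ (since its fibre is light), and then $b=q/a$ is forced. Hence the number of good pairs in $A''\times B$ is at most $2^{k+\ell}L\,\abs{AB}$; comparing with the hypothesis and choosing $L\asymp \abs{A}\abs{B}/(2^{k+\ell}\abs{AB})$ gives a contradiction unless $\abs{A''}<\tfrac{3}{4}\abs{A}$. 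Then $A'=A\setminus A''$ is large \emph{and} consists entirely of heavy fibres, so the number $\abs{C'}$ of fibres meeting $A'$ satisfies $L\abs{C'}\leq\abs{A'}$, which is the desired bound. Your parenthetical about ``retaining the heaviest fibres'' is the right instinct, but the content --- why heavy fibres already capture $\gg\abs{A}$ elements --- is exactly this threshold-and-contradict argument, which your proposal does not supply.
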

\begin{proof}
Every $x\in \bbq\backslash\{0\}$ can be written uniquely as $x=x_1x_2$ where if $p\mid x_1$ then $p\not\in S$ and $x_2\in \bbq_S$. Separating elements of $A$ according to the value of $x_1$, there exists $C$ such that if $p\mid n\in C$ then $p\not\in S$, and finite $\Gamma_c\subseteq \bbq_S$ for $c\in C$ such that
\[A=\bigcup_{c\in C}(c\cdot \Gamma_c),\]
where the $c\cdot \Gamma_c$ are disjoint. Let $L$ be some parameter to be chosen later, and let $A''\subseteq A$ be the subset of $A$ coming from those $\Gamma_c$ with $\abs{\Gamma_c}<L$. If $\lvert A''\rvert\geq \tfrac{3}{4}\lvert A\rvert$ then there must be at least $\lvert A\rvert\abs{B}/4$ many such pairs $a\in A''$ and $b\in B$ with $P(a)\cap P(b)\subseteq S$.

Any $q\in A''B$ has $<2^{k+\ell}L$ representations as $q=ab$ with $a\in A''$ and $b\in B$ and $P(a)\cap P(b)\subseteq S$. Indeed, writing $a=a_1a_2$ as above (so $a_2\in \bbq_S$ and if $p\mid a_1$ then $p\not\in S$) then one can write $q=a_1a_2b_1b_2$. Since $P(a)\cap P(b)\subseteq S$ there are no primes appearing in both $a_1$ and $a_2b_1b_2$, and hence since $\omega(q)\leq k+\ell$ the prime factorisation of $a_1$ (and hence $a_1$ itself) can be determined from $q$ at a cost of $2^{k+\ell}$. Since $a_2\in \Gamma_{a_1}$ and $a_1a_2\in A''$ the value of $a_2$, and hence the entirety of $a$, can then be determined at a cost of $<L$. 

It follows that 
\[\lvert A\rvert\abs{B}/4 < 2^{k+\ell}L\abs{A''B}\leq 2^{k+\ell}L\abs{AB},\]
which is a contradiction choosing $L=\lvert A\rvert\abs{B}/2^{k+\ell+2}\abs{AB}$. This contradiction means that $\abs{A''}<\frac{3}{4}\abs{A}$, and hence $A'=A\backslash A''$ has size $\abs{A'}\geq \abs{A}/4$. Let $C'\subseteq C$ be the corresponding subset of $C$, so that $A'\subseteq \bbq_S\cdot C'$. By construction
\[L\lvert C'\rvert \leq \lvert A'\rvert,\]
and noting the choice of $L$ above we are done.
\end{proof}

The following proposition follows immediately on combining Lemmas~\ref{lem-warmup1} and \ref{lem-warmup2}.

\begin{proposition}\label{prop-small}
Let $A\subset \bbq\backslash \{0\}$ be a finite set such that $\omega(n)\leq k$ for all $n\in A$. Suppose there is a set $B\subset \bbq\backslash\{0\}$ such that $\omega(n)\leq \ell$ for all $n\in B$ and $\abs{AB}\leq K\abs{B}$. The following both hold:

\begin{enumerate}
\item There is a set $A'\subseteq A$ of size $\abs{A'}\gg \abs{A}$ which is $O(2^{k+\ell}K)$-covered by a rank $O(k\ell)$ multiplicative group.
\item There is a set $A'\subseteq A$ of size $\abs{A'}\gg (2\ell)^{-k}\abs{A}$ which is $O(2^{k+\ell}K)$-covered by a rank $O(k)$ multiplicative group.
\end{enumerate}

\end{proposition}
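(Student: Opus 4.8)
The plan is to chain the two preceding lemmas together, once for each part, so that the only work is bookkeeping. Throughout I would record that the hypothesis $\abs{AB}\leq K\abs{B}$ means $\abs{AB}/\abs{B}\leq K$, and that $\bbq_S$ is a multiplicative group of rank $\abs{S}$.

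For part (1), first I would apply Lemma~\ref{lem-warmup1}(1) to the pair $(A,B)$ to obtain a set of primes $S$ with $\abs{S}\leq 2k\ell$ and $\sum_{a\in A}\sum_{b\in B}1_{P(a)\cap P(b)\subseteq S}\geq\tfrac12\abs{A}\abs{B}$. This is precisely the hypothesis of Lemma~\ref{lem-warmup2}, so applying that lemma gives a subset $A'\subseteq A$ with $\abs{A'}\gg\abs{A}$ and a set $C$ with $\abs{C}\ll 2^{k+\ell}\abs{AB}/\abs{B}\leq 2^{k+\ell}K$ such that $A'\subseteq\bbq_S\cdot C$. Since $\bbq_S$ has rank $\abs{S}\leq 2k\ell=O(k\ell)$, by definition $A'$ is $O(2^{k+\ell}K)$-covered by a rank $O(k\ell)$ multiplicative group.

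For part (2), I would instead apply Lemma~\ref{lem-warmup1}(2), which produces a set $S$ of primes with $\abs{S}\leq k$ together with a subset $A_0\subseteq A$ of size $\abs{A_0}\geq(2\ell)^{-k}\abs{A}$ satisfying $\sum_{a\in A_0}\sum_{b\in B}1_{P(a)\cap P(b)\subseteq S}\geq\tfrac12\abs{A_0}\abs{B}$. Since $A_0\subseteq A$, we still have $\omega(n)\leq k$ for all $n\in A_0$, so I may apply Lemma~\ref{lem-warmup2} with $A_0$ in the role of $A$: this yields $A'\subseteq A_0$ with $\abs{A'}\gg\abs{A_0}\gg(2\ell)^{-k}\abs{A}$ and a set $C$ with $\abs{C}\ll 2^{k+\ell}\abs{A_0B}/\abs{B}$ such that $A'\subseteq\bbq_S\cdot C$. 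Since $A_0\subseteq A$ gives $\abs{A_0B}\leq\abs{AB}$, we get $\abs{C}\ll 2^{k+\ell}K$, and as $\bbq_S$ has rank $\abs{S}\leq k=O(k)$ this is the claimed conclusion.

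There is no real obstacle here; the one step needing any care is noting, in part (2), that replacing $A$ by $A_0$ before invoking Lemma~\ref{lem-warmup2} does not spoil the covering bound, which is immediate from $\abs{A_0B}\leq\abs{AB}$. All the content lies in the two lemmas already established.
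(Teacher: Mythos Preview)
Your proposal is correct and is exactly what the paper does: the proposition is stated to follow ``immediately on combining Lemmas~\ref{lem-warmup1} and~\ref{lem-warmup2}'', and your bookkeeping (including the observation $\abs{A_0B}\leq\abs{AB}$ in part~(2)) supplies precisely that combination.
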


\section{Additive equations over multiplicative groups}\label{sec-add}

The previous section is the only part of the proof where it is important that we are working over $\bbq$ and use the few prime factors hypothesis. The remainder of the proof concerns the additive structure of sets which are efficiently covered by a low rank multiplicative subgroup of $\bbc^\times$.

There is a deep theory concerning linear equations in multiplicative groups, and we will only require one particular case of this. The following quantitative result was proved by Amoroso and Viada \cite[Theorem 6.2]{AmVi09}, and is a refinement of an earlier quantitative bound due to Evertse, Schlickewei, and Schmidt \cite{ESS02}. 
\begin{theorem}\label{th-subspace}
Let $\Gamma\subset \bbc^\times$ be a a multiplicative group of rank $r$. Let $m\geq 2$. For any fixed $a_0,a_1,\ldots,a_m\in \bbc\backslash \{0\}$ the number of solutions to
\[a_0=a_1z_1+\cdots+a_mz_m\textrm{ with }z_i\in \Gamma,\]
such that no non-empty subsum of the right-hand side equals $0$, is at most
\[m^{O(m^4(m+r))}.\]
\end{theorem}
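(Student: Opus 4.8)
The statement is the deep quantitative bound on non-degenerate solutions of unit equations over a multiplicative group, and the honest plan is not to reprove it from nothing but to reduce it to the quantitative subspace theorem of Evertse, Schlickewei and Schmidt~\cite{ESS02}, sharpened by the small-points estimates of Amoroso and Viada~\cite{AmVi09}, and to indicate how a bound of the stated shape falls out.

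\emph{Step 1 (normalisation).} First I would clear the coefficients: dividing through by $a_0$ we may assume $a_0=1$, and replacing $\Gamma$ by the group generated by $\Gamma$ together with $a_1^{-1},\dots,a_m^{-1}$ --- which raises the rank from $r$ to $O(r+m)$, harmless for a bound of the stated form --- we may take $a_1=\cdots=a_m=1$. Thus it suffices to bound the number of non-degenerate solutions of $z_1+\cdots+z_m=1$ with all $z_i\in\Gamma$; equivalently, the number of points $[z_1:\cdots:z_m:1]\in\mathbb{P}^m$ lying on the hyperplane $x_1+\cdots+x_m=x_{m+1}$ whose coordinate ratios all lie in $\Gamma$. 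Since only finitely many group elements occur among any finite collection of solutions, I would also assume $\Gamma$ finitely generated, so $\Gamma\cong\mu\times\bbz^{s}$ with $\mu$ finite cyclic and $s=O(r+m)$.

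\emph{Step 2 (large solutions via the subspace theorem, and induction on $m$).} Fix a height threshold $T=T(m)$ depending only on $m$, and split the solutions according to whether the absolute logarithmic Weil height of $P=[z_1:\cdots:z_m:1]$ exceeds $T$. For solutions of large height, the quantitative subspace theorem applied to the hyperplane equation confines them to a union of at most $N(m)$ proper linear subspaces of $\mathbb{P}^m$, with $N(m)$ explicit and depending only on $m$. On each such subspace, intersecting with the hyperplane and eliminating a variable yields a linear equation of exactly the same type in at most $m-1$ unknowns, still lying in $\Gamma$; the non-degeneracy hypothesis is precisely what ensures, after the standard case analysis on which subsums vanish, that we genuinely drop to strictly fewer variables. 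One then proceeds by induction on $m$, the base case being the classical $S$-unit equation $z_1+z_2=1$.

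\emph{Step 3 (small solutions --- the main obstacle).} What remains is to bound the number of solutions of small height, which reduces to bounding $\#\{\gamma\in\Gamma : h(\gamma)\leq c\,T(m)\}$ and raising to the power $m$ (the equation determines one coordinate from the others). This is the crux. The naive estimate applies Minkowski's first theorem to the lattice $\Gamma/\mu\cong\bbz^{s}$ equipped with the height quadratic form and gives a bound of the shape $(cT)^{s}$, which is exponential in $s\sim r$ and too lossy to fit inside a single factor of $r$ in the outer exponent. Beating this is exactly the Amoroso--Viada input: a sharp count of elements of bounded height in a finitely generated multiplicative group, obtained by passing to the absolute height over $\overline{\bbq}$, a specialisation argument reducing to a number field of controlled degree, and a careful use of Minkowski's second theorem. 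Granting that estimate, choosing $T(m)$ of polynomial size in $m$ and unwinding the induction over the $N(m)$ subspaces at each of the $m$ levels produces a final bound of the form $m^{O(m^4(m+r))}$, as claimed. I expect Step 3 to be where essentially all the difficulty, and all the novelty of~\cite{AmVi09} over~\cite{ESS02}, resides.
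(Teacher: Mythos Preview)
The paper does not prove this theorem at all: it is quoted as a black-box input, attributed to Amoroso and Viada \cite[Theorem 6.2]{AmVi09} as a refinement of the quantitative bound of Evertse, Schlickewei, and Schmidt \cite{ESS02}. So there is nothing to compare against; your sketch goes well beyond what the paper attempts.

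That said, a brief comment on the sketch itself. The broad architecture you describe --- normalise to $z_1+\cdots+z_m=1$ over a slightly larger group, split into large-height solutions handled by the quantitative subspace theorem plus induction on $m$, and small-height solutions counted directly --- is indeed the shape of the argument in \cite{ESS02}. Two small points: in Step~1 your reduction to $\Gamma$ finitely generated is free, since the paper's definition of rank already forces this; and the passage from $\bbc^\times$ to a number-field setting (needed before one can speak of Weil heights at all) requires a specialisation argument that you only invoke later in Step~3. More substantively, your diagnosis of where the Amoroso--Viada improvement enters is slightly off: the bound in \cite{ESS02} already has the exponent linear in $r$, namely of the shape $\exp((6m)^{3m}(r+1))$, so the gain in \cite{AmVi09} is principally in the $m$-dependence (reducing the doubly-exponential $(6m)^{3m}$ to the polynomial $O(m^4\cdot m)$), achieved via their sharp small-points estimates on subvarieties of tori rather than via a refined lattice-point count of the kind you describe. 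None of this affects the present paper, which only consumes the final inequality.
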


This immediately imposes strong restrictions on the potential additive structure of subsets of multiplicative groups of bounded rank -- for example, if $A$ is a finite subset of a multiplicative group of rank $r$ then Theorem~\ref{th-subspace}, applied with $a_0=x$, $a_1=1$, and $a_2=-1$, implies that for any $x\neq 0$
\[1_A\circ 1_A(x) \ll_r 1,\]
and hence in particular $E(A) \ll_r \abs{A}^2$ and $\abs{A+A}\gg_r \abs{A}^2$.

A large part of the power of \cite{HRSZ25} arises from finding ways to apply Theorem~\ref{th-subspace} to efficiently bound the additive structure, not only of subsets of multiplicative groups of small rank, but of subsets of the union of `few' cosets of such a group. We will give various improved forms of such bounds, giving new bounds for both $1_A\circ 1_A(x)$ for any fixed $x\neq 0$ and $E_{2m}(A)$ for any $m\geq 1$.

\subsection{A pointwise bound on $1_A\circ 1_A$}
An immediate consequence of Theorem~\ref{th-subspace} is that if $A$ is $M$-covered by a rank $r$ multiplicative group then, for all $x\neq 0$, $1_A\circ 1_A(x) \ll 2^{O(r)}M^2$. Indeed, if $A\subseteq \Gamma\cdot B$, then fixing the values of $b_1$ and $b_2$ at a cost of $M^2$ the number of choices of $\gamma_1,\gamma_2\in \Gamma$ with $\gamma_1b_1-\gamma_2b_2=x$ is at most $2^{O(r)}$. This bound can be improved (for reasonable choices of $r$ and $M$) to $M^{1+o(1)}$, by generalising an argument of Roche-Newton and Zhelezov \cite{RoZh15}.
\begin{lemma}\label{lem-point}
If $r,M\geq 1$ and $\delta\in (0,1)$ are such that 
\[\delta \log M \geq \max(r, (\log M)^{1/6})\]
and $A\subset \bbc^\times$ is $M$-covered by a rank $r$ multiplicative group then, for any $x\neq 0$, 
\[1_A\circ 1_A(x)\ll M^{1+O(\delta^{1/5})}.\]
\end{lemma}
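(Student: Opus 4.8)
The plan is to reduce to a normalised additive equation, record the trivial $S$-unit bound, and then amplify it by a graph-theoretic argument generalising that of Roche-Newton and Zhelezov.

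First I would normalise. Since $A$ is $M$-covered by a rank-$r$ multiplicative group $\Gamma$, write $A\subseteq \Gamma\cdot B$ with $\abs{B}\leq M$, and note that on replacing $A$ by $A/x$ and $B$ by $B/x$ (which changes neither $r$ nor the relevant cardinalities) we may assume $x=1$, so that $1_A\circ 1_A(x)$ counts pairs $(a_1,a_2)\in A^2$ with $a_1-a_2=1$. Partitioning $A$ into its intersections with the (disjoint) cosets of $\Gamma$ gives $A=\bigsqcup_{c\in C}A_c$ with $A_c\subseteq \Gamma c$ and $\abs{C}\leq M$. For fixed $c_1,c_2\in C$, writing $a_i=\gamma_i c_i$ turns the equation $a_1-a_2=1$ into $c_1\gamma_1-c_2\gamma_2=1$ with $\gamma_1,\gamma_2\in\Gamma$; this is the $m=2$ case of Theorem~\ref{th-subspace} (with $a_0=1$, $a_1=c_1$, $a_2=-c_2$), and neither singleton subsum can vanish, so there are at most $2^{O(r)}$ solutions. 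Summing over the at most $M^2$ pairs of cosets already gives the crude bound $1_A\circ 1_A(x)\ll 2^{O(r)}M^2$, which is essentially the naive bound recorded in the sketch.

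To save almost a full factor of $M$ I would introduce an auxiliary graph on the coset representatives $C$ — roughly, join $c$ to $c'$ when the equation above has a solution with $a_1\in A_c$ and $a_2\in A_{c'}$ — and exploit the key feature of the method of Roche-Newton and Zhelezov: if $1_A\circ 1_A(x)$ is large then this graph (suitably weighted, and after passing to a popular piece) must contain a combinatorially rich sub-configuration, such as a long alternating path or a dense bipartite subgraph. Such a configuration can be encoded as a single additive equation with $m$ terms over $\Gamma$ that is genuinely over-determined; after peeling off vanishing subsums by the usual induction, the quantitative $S$-unit bound of Theorem~\ref{th-subspace} caps its number of solutions by $m^{O(m^4(m+r))}$. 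Comparing this upper bound with the combinatorial count of such configurations that a large $1_A\circ 1_A(x)$ forces should yield an estimate of the shape
\[1_A\circ 1_A(x)\;\ll\; M^{1+O(1/m)}\cdot m^{O(m^4(m+r))}\]
valid for every integer $m\geq 2$.

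It then remains to optimise $m$. Choosing $m$ of order $\delta^{-1/5}$ (up to lower-order factors, the precise choice being what balances the $M^{O(1/m)}$ saving against the $S$-unit loss), the hypothesis $\delta\log M\geq r$ gives $r\leq m$ and hence $m^{O(m^4(m+r))}\leq m^{O(m^5)}=\exp(O(m^5\log m))$, while the hypothesis $\delta\log M\geq(\log M)^{1/6}$ (equivalently $\delta\geq(\log M)^{-5/6}$) is what guarantees $\exp(O(m^5\log m))\leq M^{O(\delta^{1/5})}$; since also $M^{O(1/m)}=M^{O(\delta^{1/5})}$, the two combine to give $1_A\circ 1_A(x)\ll M^{1+O(\delta^{1/5})}$, as claimed. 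I expect the real difficulty to lie entirely in the middle step: pinning down exactly which sub-configuration to extract from the graph, arranging matters so that the resulting $m$-term equation is non-degenerate (or a bounded union of non-degenerate pieces), and tracking the combinatorial and $S$-unit losses precisely enough to land the exponent $1+O(\delta^{1/5})$ rather than something weaker.
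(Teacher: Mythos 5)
There is a genuine gap: you have correctly identified the paper's strategy (reduce to counting pairs of coset representatives, build a graph on them, encode a rich configuration as a single long $S$-unit equation, then optimise a length parameter of order $\delta^{-1/5}$), but the entire mechanism that makes this work is left unspecified — and it is exactly the part you flag as ``the real difficulty.'' Concretely, the paper takes a directed graph on the covering set $B$ (edge $b_1\to b_2$ iff $\gamma_1b_1-\gamma_2b_2=x$ is solvable in $\Gamma$), passes to a subgraph of minimum out-degree $\geq d/2$, and shows by telescoping that a directed path $b_0\cdots b_l$ with fixed edge-witnesses yields a solution of the single equation $1+z_1+\cdots+z_{l-1}+z_l(b_0x^{-1})+z_{l+1}(b_lx^{-1})=0$ with $z_i\in\Gamma$, with distinct paths between fixed endpoints giving distinct tuples $(z_1,\ldots,z_{l+1})$. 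The non-degeneracy issue cannot simply be ``peeled off by the usual induction'' after the fact: the paper must \emph{construct} long non-degenerate paths, showing inductively that each non-degenerate path of length $l$ admits at least $d/4$ non-degenerate extensions unless $d\ll 2^l$, because extending degenerately forces $\gamma_l$ into a set of at most $2^l$ forbidden values and the minimum-degree condition supplies enough choices to avoid them. Comparing the resulting $(d/4)^l$ paths from a fixed start against $M\cdot l^{O(l^4(l+r))}$ (fixing the endpoint costs a factor $M$) gives $d\ll M^{1/l}\,l^{O(l^3(l+r))}$. None of this — the telescoping identity, the injectivity of the path-to-solution map, or the non-degenerate extension argument — appears in your proposal, so the claimed estimate of the shape $M^{1+O(1/m)}m^{O(m^4(m+r))}$ is an assertion of the goal rather than a proof.

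Two quantitative remarks on your final paragraph. First, the hypothesis $\delta\log M\geq r$ gives $r\leq\delta\log M$, which does \emph{not} imply $r\leq m$ for $m\asymp\delta^{-1/5}$ (the inequality runs the wrong way: e.g.\ $\delta$ constant and $r\asymp\log M$ is allowed); the $r$-dependence must instead be absorbed via $l^3r\log l\leq\delta^{2/5}\log M\cdot\log(1/\delta)=O(\delta^{1/5}\log M)$, which works in the paper's form of the bound. Second, your proposed shape without the $l$-th root saving is too lossy at the edge of the allowed range: with $\delta\asymp(\log M)^{-5/6}$ and $m\asymp\delta^{-1/5}$ one has $m^{O(m^5)}=\exp(O(\delta^{-1}\log(1/\delta)))$, and $\delta^{-1}\log(1/\delta)$ is \emph{not} $O(\delta^{1/5}\log M)$ there (it misses by a $\log\log M$ factor). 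The paper's argument survives because the $S$-unit factor $l^{O(l^4(l+r))}$ is only incurred once against the $l$-th power $(d/4)^l$ of the quantity being bounded, so after taking $l$-th roots the loss is $l^{O(l^3(l+r))}$, which is $M^{O(\delta^{1/5})}$ under the stated hypotheses. So even granting your middle step in the stated form, the exponent bookkeeping would need the root-saving structure of the path-counting argument to land the claimed bound.
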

One should not take the constants $1/6$ and $1/5$ too seriously; an examination of the proof shows that they can be improved slightly at notational expense.
\begin{proof}
Let $\Gamma$ be a rank $r$ multiplicative group such that $A\subseteq \Gamma\cdot B$, where $B\subseteq \bbc^\times$ is a finite set of size $\abs{B}=M$. Without loss of generality $B$ can be taken to be a set of coset representatives -- that is, $\Gamma\cdot b_1$ and $\Gamma\cdot b_2$ are disjoint for $b_1\neq b_2\in B$. We may also assume,without loss of generality, that $-1\in \Gamma$.

It suffices to bound the number of $b_1,b_2\in B$ for which there exist $\gamma_1,\gamma_2\in \Gamma$ with
\[\gamma_1b_1-\gamma_2b_2=x,\]
since once $b_1,b_2$ are fixed there are at most $2^{O(r)}$ choices for $\gamma_1,\gamma_2\in \Gamma$ by Theorem~\ref{th-subspace}.

Let $G$ be a directed graph with vertex set $B$ such that there is an edge $b_1\to b_2$ if and only if there exist $\gamma_1,\gamma_2\in \Gamma$ with $\gamma_1b_1-\gamma_2b_2=x$, so that we need to bound the number of edges in $G$; let this be denoted by $dM$.

By repeatedly removing vertices of out-degree $<d/2$ we can find a subgraph, say $G'$, which contains $\geq dM/2$ many edges, and $G'$ has minimum out-degree $\geq d/2$. For each edge $b_i\to b_j$ we fix some pair $\gamma_i,\mu_j\in \Gamma$ such that $\gamma_ib_i-\mu_jb_j=x$.

Given a path $b_0\cdots b_l$ in $G'$, with $\gamma_ib_i-\mu_{i+1}b_{i+1}=x$ for $0\leq i<l$, writing $\lambda_i=\mu_i\gamma_i^{-1}$, by telescoping the sum,
\begin{align*}
\gamma_0b_0-\lambda_1\cdots \lambda_{l-1}\mu_lb_l
&= \gamma_0b_0-\mu_1b_1+\lambda_1(\gamma_1b_1-\mu_2b_2)+\cdots\\
&\quad+\lambda_1\cdots\lambda_{l-1}(\gamma_{l-1}b_{l-1}-\mu_lb_l)\\
&=x(1+\lambda_1+\cdots+\lambda_1\cdots \lambda_{l-1}).
\end{align*}
In other words, each path of length $l$ between two fixed vertices $b_0$ and $b_l$ yields a solution to the equation
\begin{equation}\label{eq-path}
1+z_1+\cdots+z_{l-1}+z_l(b_0x^{-1})+z_{l+1}(b_lx^{-1})=0,
\end{equation}
where all $z_i\in \Gamma$, corresponding to $z_i=\lambda_1\cdots \lambda_i$ for $1\leq i<l$, $z_l=-\gamma_0$, and $z_{l+1}=\lambda_1\cdots \lambda_{l-1}\mu_l$.

Moreover, any two distinct such paths between the same endpoints $b_0$ and $b_l$ yield distinct $z_1,\ldots,z_{l+1}$. Indeed, fixing all $z_i$ fixes $\lambda_i$ for $1\leq i<l$, and $\gamma_0$ and $\mu_l$. Since $\mu_1b_1=x-\gamma_0b_0$ is known, and since $b_1\in B$ which is a set of coset representatives, we can recover the value of both $\mu_1$ and $b_1$. Since we know $\lambda_1$ we also know $\gamma_1$. In general, once $b_i,\gamma_i,\mu_i$ are known for all $0\leq i\leq j$, the value of $\mu_{j+1}b_{j+1}=x-\gamma_ib_i$ is also known, whence $\mu_{j+1},b_{j+1},\gamma_{j+1}$ are also known. In particular fixing all $z_i$ fixes all $b_0,\ldots,b_l$ as claimed.

We would now like to apply Theorem~\ref{th-subspace} to bound the number of such $z_i$ (and hence the number of such paths), but we must take care that we are only bounding the number of non-degenerate solutions. We therefore call a path $b_0\cdots b_l$ non-degenerate if no proper subsum of the left-hand side of \eqref{eq-path} vanishes.

This is achieved inductively -- for $l=1$ this amounts to ensuring  that no proper subsum of 
\[x-\gamma_0b_0+\mu_1b_1\]
vanishes, which is trivial. In particular every path of length $1$ is non-degenerate.

In general, we claim that either $d\ll 2^l$ or any non-degenerate path $b_0\cdots b_l$ of length $l$ can be extended to a non-degenerate path of length $l+1$ in at least $d/4$ many ways. For $b_0\cdots b_{l+1}$ to be non-degenerate requires no proper subsum of
\[\brac{1+\lambda_1+\cdots+\lambda_1\cdots\lambda_{l-1}-\gamma_0b_0x^{-1}}+\lambda_1\cdots\lambda_l+\lambda_1\cdots\lambda_l\mu_{l+1}b_{l+1}x^{-1}\]
vanishes. By non-degeneracy of $b_0\cdots b_l$ the first bracketed sum (and any subsum) cannot vanish. Let $\Sigma$ be the set of values of all subsums of the first bracketed sum. To ensure that the extension by $b_{l+1}$ is still non-degenerate it therefore suffices to ensure that
\[\lambda_l\not\in -(\lambda_1\cdots \lambda_{l-1})^{-1}\Sigma.\]
Recalling that $\lambda_l=\mu_l\gamma_l^{-1}$ (and all $\gamma_i$ for $1\leq i<l$ and $\mu_j$ for $1\leq j\leq l$ are determined by $b_0\cdots b_l$) this amounts to a set of at most $2^{l}$ many forbidden values for $\gamma_l$.

Furthermore, note that fixing a value of $\gamma_l$ fixes $\mu_{l+1}b_{l+1}=\gamma_lb_l-x$ and hence also fixes $b_{l+1}$. In other words, for a fixed non-degenerate path $b_0\cdots b_l$ there are at most $2^l$ possible $b_{l+1}$ such that $b_0\cdots b_{l+1}$ forms a degenerate path. Since the minimum degree of $G'$ is $d/2$, either $d\leq 2^{l+2}$, or there are at least $d/4$ extensions to a non-degenerate path as claimed.

It follows that either $d\ll 2^l$ or there are at least $(d/4)^l$ many non-degenerate paths beginning at any fixed $b_0\in B'$. We can fix the endpoint $b_l$ losing only a factor of $\abs{B}$. By the discussion above the number of non-degenerate paths with fixed endpoints $b_0$ and $b_l$ is $\ll l^{O(l^4(l+r))}$, and hence
\[(d/4)^l\abs{B}^{-1}\ll l^{O(l^4(l+r))},\]
so that
\[d \ll \abs{B}^{1/l} l^{O(l^3(l+r))}.\]
Choosing $l=\lfloor \delta^{-1/5}\rfloor$ yields the result.
\end{proof}

\subsection{Bounds on additive energies}

Recall that the additive energy $E(A)$ counts the number of solutions to $a_1+a_2-a_3=a_4$ with $a_i\in A$. If $A$ is $M$-covered by a rank $r$ multiplicative group, say $A\subseteq \Gamma\cdot B$ with $\abs{B}\leq M$, then Theorem~\ref{th-subspace} implies $E(A)\ll_r \abs{A}^2+\abs{A}M^3$. Indeed, after spending $\abs{A}$ to fix $a_4$ and $M^3$ to fix $b_1,b_2,b_3$ there are $O_r(1)$ many choices for $\gamma_1,\gamma_2,\gamma_3$ with
\[\gamma_1b_1+\gamma_2b_2-\gamma_3b_3=a_4.\]
This almost works, except that we need to be sure that no proper subsum of the left-hand side vanishes. Provided $0\not\in A$, it is easy to see that such degenerate solutions contribute $O(\abs{A}^2)$ to the additive energy, whence
\[E(A) \ll_r \abs{A}^2+\abs{A}M^3\]
as claimed. In this section we will prove an improvement over this, establishing that, in fact,
\[E(A) \ll_r \abs{A}^2+\abs{A}M^2.\]
The key observation is that the obvious generalisation of the preceding argument gives non-trivial bounds for higher additive energies $E_{2m}(A)$ for all $m\geq 1$, namely $E_{2m}(A) \ll \abs{A}^m+\abs{A}M^{2m-1}$. Furthermore, by H\"{o}lder's inequality we can efficiently bound 
\[E(A)\leq \abs{A}^{\frac{m-2}{m-1}}E_{2m}(A)^{\frac{1}{m-1}}.\] 
In particular,
\[E(A) \ll \abs{A}^2+\abs{A}M^{\frac{2m-1}{m-1}}.\]
Taking $m\to \infty$ yields the claimed result. In the remainder of this section we make this sketch precise. 

\begin{lemma}\label{lem-energies}
Let $A$ be a finite set in an abelian group. Let $k,r,n$ be integers such that $k\geq n/2\geq r\geq 1$. For any $x$ and $\epsilon_1,\ldots,\epsilon_n\in \{-1,1\}$ the number of solutions to
\[x=\epsilon_1a_1+\cdots+\epsilon_na_n\]
with $a_i\in A$ for $1\leq i\leq n$ is at most
\[E_{2r}(A)^{\frac{2k-n}{2k-2r}}E_{2k}(A)^{\frac{n-2r}{2k-2r}}.\]
\end{lemma}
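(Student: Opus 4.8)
The statement to prove, Lemma~\ref{lem-energies}, is an interpolation bound: the count of representations of a fixed $x$ as a signed sum $\epsilon_1 a_1 + \cdots + \epsilon_n a_n$ is controlled by a weighted geometric mean of $E_{2r}(A)$ and $E_{2k}(A)$ with exponents $\frac{2k-n}{2k-2r}$ and $\frac{n-2r}{2k-2r}$, which sum to $1$. The key observation is that this representation count is, up to reflecting the $\epsilon_i=-1$ coordinates, the value at $x$ of an $n$-fold convolution, and $n$ sits between $2r$ and $2k$, so one should be able to sandwich it between the corresponding higher energies by a single application of H\"older's inequality in the right variable.

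The plan is as follows. First I would reduce to the case where all $\epsilon_i=+1$: replacing $a_i$ by $-a_i$ when $\epsilon_i=-1$ does not change $A$ up to reflection, and $E_{2j}(A)=E_{2j}(-A)$ for every $j$, so the count is exactly $1_A^{(n')}\ast 1_{-A}^{(n-n')}(x)$ for the relevant split, which has the same $L^p$ behaviour as $1_A^{(n)}$; more cleanly, I would just note the representation count is $\le \|1_A^{(n)}\|_\infty$-type quantity and work with $f=1_A^{(n)}$ after this reduction, or simply absorb the signs into a set $A'$ with $|A'|=|A|$ and the same higher energies. Second, write $N_n(x)$ for the number of solutions; then $N_n(x) = \langle 1_A^{(r)}\circ 1_A^{(r)} \,,\, (\text{a shift of } 1_A^{(n-2r)})\rangle$ after pairing $r$ of the summands against another $r$ — more precisely, $N_n(x) = \sum_y 1_A^{(r)}\circ 1_A^{(r)}(y) \cdot 1_A^{(n-2r)}(x+y)$ using $\circ$ for difference convolution, since asking $\epsilon_1 a_1 + \cdots = x$ with $2r$ of the terms grouped as a difference-type contribution. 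Third, apply H\"older with exponents chosen so that $1_A^{(r)}\circ 1_A^{(r)}$ appears to a power giving $E_{2r}(A)$ (note $\sum_y (1_A^{(r)}\circ 1_A^{(r)}(y))^2 = E_{2r}(A)$ is false as stated — rather $\|1_A^{(r)}\|_2^2$ relates to $E_{2r}$; I would track this carefully) and the remaining factor, an iterated convolution of $1_A$, appears to a power yielding $E_{2k}(A)$ via the standard bound $\|1_A^{(m)}\|_p \le \cdots$ governed by $E_{2k}$ when $2m$ is comparable to $2k$.

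The honest core of the argument is this: by H\"older's inequality, for any nonnegative function $g$ and exponents $1/p+1/q=1$, $\sum_y g(y) h(x+y) \le \|g\|_p \|h\|_q$, and one chooses $g,h$ as convolution powers of $1_A$ of total order $n$ so that $\|g\|_p$ is a power of $E_{2r}(A)$ and $\|h\|_q$ is a power of $E_{2k}(A)$; the exponents $\frac{2k-n}{2k-2r}$ and $\frac{n-2r}{2k-2r}$ are exactly what the bookkeeping forces. Concretely I expect to use the inequality $\|1_A^{(s)}\|_{2t/s}^{2t/s} \le E_{2t}(A)$-type relations (which follow from $\|1_A^{(s)}\|_2^2 = E_{2s}(A)$ together with log-convexity of $\ell^p$ norms, i.e. another H\"older), and then a final H\"older across the two pieces. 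The main obstacle — and the step I would be most careful about — is the combinatorial bookkeeping: correctly splitting $n = $ (something giving an $E_{2r}$ factor) $+$ (something giving an $E_{2k}$ factor), verifying the hypotheses $k \ge n/2 \ge r \ge 1$ are exactly what make all the intermediate exponents nonnegative and $\le 1$, and making sure the reflection/sign reduction does not disturb any of the higher energies. None of the individual inequalities is deep (it is all H\"older / log-convexity of norms), but getting the exponents to land on $\frac{2k-n}{2k-2r}$ and $\frac{n-2r}{2k-2r}$ requires solving a small linear system for the H\"older parameters, and that is where an error would most easily creep in.
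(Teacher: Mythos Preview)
Your physical-space strategy is viable and would eventually yield the lemma, but the paper takes a different and much shorter route. The paper passes (via a Freiman model) to a finite ambient group, writes the representation count by orthogonality as
\[
\bbe_{\gamma}\gamma(-x)\prod_{i=1}^{n}\widehat{1_A}(\epsilon_i\gamma),
\]
observes that $\lvert\widehat{1_A}(\epsilon_i\gamma)\rvert=\lvert\widehat{1_A}(\gamma)\rvert$ so the signs vanish after the triangle inequality, and then applies a single H\"older inequality in the spectral variable:
\[
\bbe_{\gamma}\lvert\widehat{1_A}(\gamma)\rvert^{n}\leq\Bigl(\bbe_{\gamma}\lvert\widehat{1_A}(\gamma)\rvert^{2r}\Bigr)^{\frac{2k-n}{2k-2r}}\Bigl(\bbe_{\gamma}\lvert\widehat{1_A}(\gamma)\rvert^{2k}\Bigr)^{\frac{n-2r}{2k-2r}}.
\]
Since $\bbe_{\gamma}\lvert\widehat{1_A}(\gamma)\rvert^{2m}=E_{2m}(A)$ exactly, this is already the conclusion.

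By contrast, your plan stays in physical space: split the $n$ convolution factors into two blocks and apply Cauchy--Schwarz, then interpolate the resulting $E_{2(n-s)}(A)$ between $E_{2r}(A)$ and $E_{2k}(A)$ by log-convexity. This does work, but (i) your sign reduction needs the observation that for any split of the $\epsilon_i$ one has $\lVert 1_A^{(p)}\ast 1_{-A}^{(q)}\rVert_2^2=E_{2(p+q)}(A)$, not merely $E_{2j}(A)=E_{2j}(-A)$; (ii) a single Cauchy--Schwarz with block sizes $r$ and $n-r$ lands you at $E_{2(n-r)}(A)$, which need not lie between $E_{2r}$ and $E_{2k}$ in the required sense (one must case-split on whether $n\leq k+r$ or $n\geq k+r$ and choose the block sizes accordingly); and (iii) the log-convexity of $m\mapsto E_{2m}(A)$ that you invoke is itself most cleanly seen on the Fourier side as log-convexity of moments. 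So what the Fourier proof buys is that signs, the block decomposition, and the interpolation all collapse into one H\"older step with no case analysis; what your approach buys is avoiding the Freiman embedding, at the cost of the extra bookkeeping you correctly anticipated.
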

\begin{proof}
Passing to a Freiman-isomorphic model if necessary (see \cite[Chapter 5]{TaVu06}) we may assume that the ambient group is finite. By orthogonality, the count to be estimated is equal to
\[\bbe_{\gamma}  \gamma(-x)\prod_{1\leq i\leq n}\widehat{1_A}(\epsilon_i\gamma) ,\]
where the expectation is over the dual group, and $\widehat{1_A}(\gamma)=\sum_{n\in A}\gamma(n)$. By the triangle inequality and then H\"{o}lder's inequality this is at most
\[\bbe_\gamma \Abs{\widehat{1_A}(\gamma)}^n\leq \brac{\bbe_\gamma \Abs{\widehat{1_A}(\gamma)}^{2r}}^{\frac{2k-n}{2k-2r}}\brac{\bbe_\gamma \Abs{\widehat{1_A}(\gamma)}^{2k}}^{\frac{n-2r}{2k-2r}}.\]
The claim now follows since, for example, orthogonality implies $\bbe_\gamma \Abs{\widehat{1_A}(\gamma)}^{2r}=E_{2r}(A)$.
\end{proof}

\begin{lemma}\label{lem-energy}
If $A\subset \bbc$ is $M$-covered by a rank $r$ multiplicative group then, for any $m\geq k\geq 1$,
\[E_{2k}(A)\leq 2^{O(km)}\abs{A}^{k}+m^{O(km^3(m+r))}\abs{A}M^{2k-2+\frac{k-1}{m-1}}.\]
\end{lemma}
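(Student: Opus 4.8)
The plan is to bound $E_{2k}(A)$ directly by the $S$-unit equation count from Theorem~\ref{th-subspace}, splitting the count of solutions to $a_1+\cdots+a_k=a_{k+1}+\cdots+a_{2k}$ into degenerate and non-degenerate parts according to whether some proper subsum of the corresponding equation over $\Gamma$ vanishes. Write $A\subseteq \Gamma\cdot B$ with $|B|\le M$, and without loss of generality take $B$ to be a set of coset representatives and $-1\in\Gamma$. Each $a_i$ is $\gamma_i b_i$ for a unique $b_i\in B$ and some $\gamma_i\in\Gamma$. First I would handle the non-degenerate contribution: fix the $2k$-tuple $(b_1,\ldots,b_{2k})\in B^{2k}$ at a cost of $M^{2k}$, fix $a_{2k}$ (equivalently $\gamma_{2k}$) at a cost of $|A|$, and then observe that the equation $\gamma_1 b_1+\cdots+\gamma_k b_k-\gamma_{k+1}b_{k+1}-\cdots-\gamma_{2k-1}b_{2k-1}=a_{2k}$ in the $2k-1$ unknowns $\gamma_1,\ldots,\gamma_{2k-1}\in\Gamma$, under the non-degeneracy hypothesis, has at most $(2k)^{O((2k)^4(2k+r))}=(2k)^{O(k^4(k+r))}$ solutions by Theorem~\ref{th-subspace} (after dividing through by $a_{2k}$ and absorbing the $b_i$ into the coefficients). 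This gives a contribution of at most $(2k)^{O(k^4(k+r))}\,|A|\,M^{2k-1}$. At this point I would note this already beats the target exponent on $M$ when $m$ is large, but the naive version is stated only with $M^{2k-1}$; the improvement to $M^{2k-2+\frac{k-1}{m-1}}$ comes from the Hölder step below, so in fact I should run the $S$-unit count not for $E_{2k}$ directly but for $E_{2m}$.

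So the actual route: apply the above argument with $k$ replaced by $m$ to get $E_{2m}(A)\le 2^{O(m^2)}|A|^m + m^{O(m^4(m+r))}|A|M^{2m-1}$, where the first term bounds the degenerate solutions. For the degenerate count one argues inductively on $m$: a solution in which some proper nonempty subsum of $\epsilon_1\gamma_1 b_1+\cdots+\epsilon_{2m}\gamma_{2m}b_{2m}$ vanishes splits into two shorter vanishing-sum problems of total length $2m$, each of which is a translate/dilate of an $E_{2j}$-type count; bounding $E_{2j}(A)\le |A|^j$ crudely (valid since $E_{2j}\le |A|^{2j}/|jA|\le\cdots$, or simply $E_{2j}(A)\le |A|^{j-1}\cdot|A|\cdots$ — more carefully $E_{2j}(A)\le |A|^{2j-?}$; the clean bound is $E_{2j}(A)\le |A|^{2j-1}$ but for the degenerate recursion one needs the sharper $|A|^j$ which follows because a vanishing subsum forces dependencies) and summing over the $2^{O(m)}$ ways of choosing the subsum structure yields the $2^{O(m^2)}|A|^m$ term. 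Then I would combine this with Lemma~\ref{lem-energies}: taking $n=2k$, ambient parameters $r\rightsquigarrow 1$ (i.e. using $E_{2r}\to E_2=|A|$) and the large energy exponent $\rightsquigarrow m$, that lemma gives
\[
E_{2k}(A)\le E_2(A)^{\frac{2m-2k}{2m-2}}E_{2m}(A)^{\frac{2k-2}{2m-2}}=|A|^{\frac{m-k}{m-1}}E_{2m}(A)^{\frac{k-1}{m-1}}.
\]
Substituting the bound for $E_{2m}(A)$ and using subadditivity of $t\mapsto t^{\frac{k-1}{m-1}}$ (since $\frac{k-1}{m-1}\le 1$) gives
\[
E_{2k}(A)\le |A|^{\frac{m-k}{m-1}}\Big(2^{O(m^2)}|A|^{m\cdot\frac{k-1}{m-1}} + m^{O(m^4(m+r))}|A|^{\frac{k-1}{m-1}}M^{(2m-1)\frac{k-1}{m-1}}\Big).
\]
The first term is $2^{O(m^2\cdot\frac{k-1}{m-1})}|A|^{\frac{m-k}{m-1}+\frac{m(k-1)}{m-1}}=2^{O(km)}|A|^{k}$, and the second is $m^{O(km^3(m+r))}|A|^{\frac{m-k}{m-1}+\frac{k-1}{m-1}}M^{2k-2+\frac{k-1}{m-1}}=m^{O(km^3(m+r))}|A|\,M^{2k-2+\frac{k-1}{m-1}}$, exactly as claimed (the exponent on $m$ in the constant after raising to power $\frac{k-1}{m-1}$ becomes $O(m^4(m+r)\cdot\frac{k-1}{m-1})=O(km^3(m+r))$).

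The main obstacle I anticipate is the degenerate-solution bookkeeping in the $E_{2m}$ count: one must verify that restricting $a_{2k}$ (or rather the analogue with $2m$) and the coset representatives really does reduce to the hypotheses of Theorem~\ref{th-subspace} with the right number of variables and that coefficients stay nonzero, and separately that the recursive bound on degenerate solutions genuinely yields $|A|^m$ rather than something weaker like $|A|^{2m-1}$ — this requires being careful that a vanishing proper subsum of length $2j$ over $\Gamma\cdot B$ contributes at most $|A|^{j}$ (by induction, the base case $E_2(A)=|A|$) times a bounded combinatorial factor, and that the two pieces of the split multiply rather than add their $|A|$-powers. Everything else is Hölder and arithmetic with exponents.
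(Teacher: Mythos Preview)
Your overall architecture is exactly the paper's: bound $E_{2m}(A)$ first via the $S$-unit count of Theorem~\ref{th-subspace}, then descend to $E_{2k}$ via the H\"older inequality of Lemma~\ref{lem-energies}. The non-degenerate bound $E_{2m}^*(A)\le m^{O(m^4(m+r))}|A|M^{2m-1}$ and the final exponent arithmetic are both correct and match the paper verbatim.

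The gap is precisely where you flagged it: the degenerate contribution. Your proposed induction does not work as written --- the ``crude'' bound $E_{2j}(A)\le |A|^j$ is not crude at all but is essentially the conclusion, and if you instead feed in the honest inductive hypothesis $E_{2j}(A)\ll_j |A|^j+|A|M^{2j-1}$ and multiply the two factors, the cross-terms such as $|A|^{\ell/2+1}M^{2m-\ell-1}$ are not obviously dominated by $|A|^m+|A|M^{2m-1}$. The paper sidesteps induction entirely with a self-referential bootstrap: apply Lemma~\ref{lem-energies} (with $r=1$, $k=m$, $x=0$) to \emph{each} of the two vanishing subsums of lengths $\ell$ and $2m-\ell$, obtaining for each a bound of the shape $|A|^{(2m-\ell)/(2m-2)}E_{2m}(A)^{(\ell-2)/(2m-2)}$; the product is $|A|^{m/(m-1)}E_{2m}(A)^{(m-2)/(m-1)}$ independently of $\ell$. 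Summing over the $\le 2^{2m}$ choices of subset gives
\[
E_{2m}(A)\le E_{2m}^*(A)+2^{O(m)}|A|^{\frac{m}{m-1}}E_{2m}(A)^{\frac{m-2}{m-1}},
\]
and solving this inequality for $E_{2m}(A)$ yields $E_{2m}(A)\ll E_{2m}^*(A)+2^{O(m^2)}|A|^m$ directly. In other words, the same H\"older lemma you already use at the end also handles the degeneracies, and no recursion is needed.
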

In particular, note that taking $k=2$ and letting $m\to \infty$ slowly implies
\[E_4(A) \leq \abs{A}^{2+o(1)}+\abs{A}^{1+o(r)}M^2.\]
\begin{proof}
Let $2\leq \ell <2m$. By Lemma~\ref{lem-energies}, applied with $r=1$ and $k=m$, the contribution to $E_{2m}(A)$ from those $2m$-tuples in which a subset of $\ell$ summands sums to $0$ is at most
\[\binom{2m}{\ell}\brac{ \abs{A}^{\frac{2m-\ell}{2m-2}}E_{2m}(A)^{\frac{\ell-2}{2m-2}}}\brac{ \abs{A}^{\frac{2m-(2m-\ell)}{2m-2}}E_{2m}(A)^{\frac{2m-\ell-2}{2m-2}}}\]
\[\leq \binom{2m}{\ell}\abs{A}^{\frac{m}{m-1}}E_{2m}(A)^{\frac{m-2}{m-1}}.\]
Let $E_{2m}^*(A)$ count the number of tuples 
\[a_1+\cdots-a_{2m}=0\]
such that no non-empty subsum equals $0$. Summing the preceding inequality over all $2\leq \ell\leq 2m-2$ (and noting that since $0\not\in A$ no subset of $1$ or $2m-1$ summands can sum to $0$)
\[E_{2m}(A) \leq E_{2m}^*(A)+2^{O(m)}\abs{A}^{\frac{m}{m-1}}E_{2m}(A)^{\frac{m-2}{m-1}}\]
and hence
\[E_{2m}(A) \ll E_{2m}^*(A)+2^{O(m^2)}\abs{A}^m.\]
Let $\Gamma$ be a multiplicative group of rank $r$ and let $B$ be a set of size $M$ such that $A\subset\Gamma\cdot B$. The count $E_{2m}^*(A)$ is at most the number of solutions to
\[x_1b_1+\cdots+x_{2m-1}b_{2m-1}=a_{2m}\]
with no subsum being zero, where $a_{2m}\in A$, $b_i\in B$, and $x_i\in \Gamma$. There are at most $\abs{A}\abs{B}^{2m-1}$ choices for $b_1,\ldots,b_{2m-1},a_{2m}$, after which, by Theorem~\ref{th-subspace}, there are $\leq m^{O(m^4(m+r))}$ choices for the $x_i$. It follows that
\[E_{2m}(A) \ll 2^{O(m^2)}\abs{A}^m+ m^{O(m^4(m+r))}\abs{A}\abs{B}^{2m-1}.\]
We now amplify this by applying Lemma~\ref{lem-energies} (with $n=2k$, $x=0$, and $r=1$), so that, for any $1\leq k\leq m$,
\[E_{2k}(A)\leq \abs{A}^{\frac{m-k}{m-1}}E_{2m}(A)^{\frac{k-1}{m-1}},\]
and the conclusion follows.
\end{proof}

\section{Lower bounds on the sumset}\label{sec-add2}

An immediate implication of the energy bounds of Lemma~\ref{lem-energy} is the following, which (when coupled with the results of Section~\ref{sec-comb}) implies the main result of \cite{HRSZ25}.

\begin{theorem}\label{th-olden}
Let $r,M\geq 1$ and $\delta\in (0,1)$ be such that
\[\delta \log M \geq \max(r, (\log M)^{1/6}).\]
If $A\subset \bbc$ is $M$-covered by a rank $r$ multiplicative group then
\[E(A)\ll 2^{O(\delta^{-1/5})}\abs{A}^2+\abs{A}M^{2+O(\delta^{1/5})}.\]
In particular, if $\abs{A+A}=K\abs{A}$, then
\[\max(K,M)\geq \abs{A}^{\frac{2}{3}-O(\delta^{-1/5})}.\]
\end{theorem}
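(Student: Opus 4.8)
The plan is to deduce Theorem~\ref{th-olden} almost immediately from Lemma~\ref{lem-energy}, so most of the work is already done; what remains is to choose the free parameter $m$ in that lemma optimally (as a function of $\delta$) and then combine the resulting energy bound with the standard Cauchy--Schwarz inequality $E(A)\geq \abs{A}^4/\abs{A+A}$. First I would record Lemma~\ref{lem-energy} in the case $k=2$: for any $m\geq 2$,
\[E(A)=E_4(A)\leq 2^{O(m)}\abs{A}^2 + m^{O(m^3(m+r))}\abs{A}M^{2+\frac{1}{m-1}}.\]
The exponent of $M$ here is $2+\frac{1}{m-1}$, which tends to $2$ as $m\to\infty$, so the point is to take $m$ as large as the hypothesis $\delta\log M\geq\max(r,(\log M)^{1/6})$ will allow while keeping the combinatorial prefactor $m^{O(m^3(m+r))}$ under control.

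The key step is the parameter choice. I would take $m = \lfloor \delta^{-1/5}\rfloor$, mirroring the choice made in the proof of Lemma~\ref{lem-point}; the role of the hypothesis $\delta\log M\geq (\log M)^{1/6}$, i.e.\ $\delta\geq (\log M)^{-5/6}$, is precisely to guarantee $m\ls (\log M)^{1/6}$, so that $m^{O(m^3(m+r))}\leq M^{O(\delta^{1/5})}$ after using $r\leq\delta\log M$ to absorb the $r$-dependence; similarly $2^{O(m)}=2^{O(\delta^{-1/5})}$. With this choice the extra exponent $\frac{1}{m-1}$ on $M$ is $O(\delta^{1/5})$, and the prefactors become $2^{O(\delta^{-1/5})}$ on the $\abs{A}^2$ term and $M^{O(\delta^{1/5})}$ on the $\abs{A}M^2$ term, giving exactly
\[E(A)\ll 2^{O(\delta^{-1/5})}\abs{A}^2+\abs{A}M^{2+O(\delta^{1/5})}\]
as stated. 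I should double-check that the condition $m\geq k$ of Lemma~\ref{lem-energy} holds, i.e.\ $\lfloor\delta^{-1/5}\rfloor\geq 2$, which follows for $\delta$ bounded away from $1$ (and the statement is vacuous otherwise since then $M$ is bounded).

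For the ``in particular'' clause, I would combine the energy bound with $\abs{A}^4/K\abs{A}=\abs{A}^3/K\leq E(A)$, giving
\[\frac{\abs{A}^3}{K}\ll 2^{O(\delta^{-1/5})}\abs{A}^2+\abs{A}M^{2+O(\delta^{1/5})}.\]
Hence either $\abs{A}^3/K\ll 2^{O(\delta^{-1/5})}\abs{A}^2$, which rearranges to $K\gg 2^{-O(\delta^{-1/5})}\abs{A}$ and in particular $K\geq\abs{A}^{1-o(1)}$, or $\abs{A}^3/K\ll \abs{A}M^{2+O(\delta^{1/5})}$, which rearranges to $KM^{2+O(\delta^{1/5})}\gg\abs{A}^2$; in the latter case $\max(K,M)^{3+O(\delta^{1/5})}\gg\abs{A}^2$, so $\max(K,M)\geq\abs{A}^{2/3-O(\delta^{1/5})}$, and absorbing the harmless $2^{O(\delta^{-1/5})}$ factor into the error exponent (writing it as $\abs{A}^{O(\delta^{-1/5})/\log\abs{A}}$, which is the spirit of the $o(1)$) gives the claimed $\max(K,M)\geq\abs{A}^{2/3-O(\delta^{-1/5})}$. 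The only mildly delicate point — and the one I'd be most careful about — is bookkeeping the two different error rates $\delta^{1/5}$ and $\delta^{-1/5}$ and confirming they combine into the single claimed exponent; there is no real obstacle here, just the need to track which term each prefactor attaches to.
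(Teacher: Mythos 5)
Your proposal is correct and follows essentially the same route as the paper: apply Lemma~\ref{lem-energy} with $k=2$ and $m=\lfloor\delta^{-1/5}\rfloor$ (using the hypothesis $\delta\log M\geq\max(r,(\log M)^{1/6})$ to absorb the prefactors $2^{O(m)}$ and $m^{O(m^3(m+r))}$ as you describe), then feed the energy bound into $\abs{A+A}\geq\abs{A}^4/E(A)$, i.e.\ inequality \eqref{eq-holderenergy}, and split into the two cases exactly as the paper does. Your parenthetical dismissal of the regime $\lfloor\delta^{-1/5}\rfloor<2$ (``$M$ is bounded'') is not quite the right reason — there the conclusion is simply trivial or follows from the lemma with $m=2$ since $\delta^{1/5}\gg 1$ — but this is a negligible edge case that the paper also leaves implicit.
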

\begin{proof}
The upper bound on $E(A)=E_4(A)$ follows from applying Lemma~\ref{lem-energy} with $k=2$ and $m=\lfloor \delta^{-1/5}\rfloor$. The second bound is a consequence of the inequality \eqref{eq-holderenergy}, which yields
\[\abs{A+A}\geq \frac{\abs{A}^4}{E(A)}\gg \min\brac{  2^{-O(\delta^{-1/5})}\abs{A}^2, \abs{A}^{3}M^{-2-O(\delta^{1/5})}}.\]
\end{proof}

Recalling that, in the application to sets of integers with $\leq k$ prime factors, Proposition~\ref{prop-small} allows us to take $M\ll_k \abs{AA}/\abs{A}$, this recovers the bound
\[\max(\abs{AA},\abs{A+A})\geq \abs{A}^{5/3-o(1)}\]
of \cite{HRSZ25}.

In the remainder of this section we will use combinatorial arguments and the non-trivial upper bounds for $E_4(A)$, $E_8(A)$, and $\max_{x\neq 0}1_A\circ 1_A(x)$ from the last section to go beyond this exponent of $5/3$ -- albeit only for the size of $A+A$ (or $A-A$), rather than the additive energy. We remind the reader that, bearing in mind the example of Balog and Wooley, the bound on the additive energy in Theorem~\ref{th-olden} cannot be improved.

The new ingredient is an inequality from the theory of higher additive energies, which has been extensively developed in a number of papers of Shkredov and Schoen-Shkredov. This theory explores the relationship between the higher additive energies $E_{2m}(A)$ and higher moments of the convolution. The following inequality will suffice for our purposes. It is a variant of a result given in various special forms in a number of papers of Schoen and Shkredov -- see, for example, \cite[Remark 40]{Sh26} and \cite[Lemma 3]{ScSh13}. 

We give a short self-contained proof of a general form of this inequality (we will only require the $k=2$ case of this for our application, but prove the more general case here for completeness).
\begin{lemma}\label{lem-shkredov}
For any finite sets $A$, $B$, and $C$, and any $k\geq 1$
\[\brac{\sum_{c\in C}1_A\ast 1_B(c)}^{4k}\ll \abs{A}^{2k}\abs{B}^{2k}E_{2k}(C)\brac{E_{2k}(C)+\sum_{x\neq 0}1_A\circ 1_A(x)^{k}1_B\circ 1_B(x)^k},\]
where the implicit constant is absolute.
\end{lemma}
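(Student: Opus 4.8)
The plan is to interpret both sides in terms of the convolution $f := 1_A \ast 1_B$ and exploit a Cauchy--Schwarz argument iterated $k$ times, or equivalently a single application of Hölder together with a representation-counting identity. Writing $N := \sum_{c \in C} f(c) = \sum_{c \in C} 1_A \ast 1_B(c)$, the key observation is that $N$ counts pairs $(a,b) \in A \times B$ with $a + b \in C$, and we want to bound $N^{4k}$. First I would dyadically or directly split: by Cauchy--Schwarz in the variable $c$ restricted to $C$,
\[
N^2 = \Bigl(\sum_{c \in C} f(c)\Bigr)^2 \leq \abs{C}' \sum_{c} 1_C(c) f(c)^2,
\]
but this throws away too much; instead the right move is to keep $f(c)$ to a higher power. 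So I would aim for
\[
N^{4k} \leq \Bigl(\sum_c 1_C(c) f(c)\Bigr)^{4k} \leq \Bigl(\sum_c 1_C(c) f(c)^{?}\Bigr)^{\cdots}\,\abs{\cdot}^{\cdots},
\]
choosing exponents so that $E_{2k}(C) = \sum_x 1_C^{(k)}(x)^2$ appears. Concretely, I expect the cleanest route is: apply Hölder to $N = \sum_c 1_C(c) f(c)$ with exponents $(2k, \tfrac{2k}{2k-1})$ after first raising to a power, or better, split $f(c)^{2} \leq f(c) \cdot (\abs{A}\abs{B})^{?}$ is wrong — rather, use that $f(c) = \sum_{a} 1_A(a) 1_B(c-a)$ and think of $N$ as an inner product.

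The central step I would carry out: let $g = 1_C$ and note $N = \langle f, g\rangle$. Then $N^{2k} = \langle f, g \rangle^{2k}$. Write $f^{(k)}$-type quantities: one has $\langle f, g\rangle^{2k} = \langle f^{\otimes 2k}, \dots\rangle$ — this is getting complicated, so instead I would use the standard "tensor power trick" form. Observe
\[
N^{2} = \Bigl(\sum_{c}1_C(c)\, 1_A\ast 1_B(c)\Bigr)^2 \leq \Bigl(\sum_c 1_A\ast 1_B(c)^2 \cdot \tfrac{1}{w(c)}\Bigr)\Bigl(\sum_c 1_C(c)^2 w(c)\Bigr)
\]
for any weights, but the clean statement suggests instead proving it by considering the $k$-fold sumset: $\bigl(\sum_c f(c) 1_C(c)\bigr)^k = \sum_{c_1,\dots,c_k} \prod f(c_i) 1_C(c_i)$, group by $s = c_1 + \cdots + c_k$, and apply Cauchy--Schwarz in $s$ against $1_C^{(k)}(s)$. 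This gives
\[
N^{k} \leq \Bigl(\sum_s \bigl(\textstyle\sum_{c_1+\cdots+c_k=s}\prod_i f(c_i)\bigr)^2\Bigr)^{1/2} \Bigl(\sum_s 1_C^{(k)}(s)^2\Bigr)^{1/2} = \Bigl(\sum_s (f^{(k)}(s))^2\Bigr)^{1/2} E_{2k}(C)^{1/2},
\]
where $f^{(k)} = f \ast \cdots \ast f$ ($k$ times) and I used $\sum_{c_1+\cdots+c_k=s}\prod f(c_i) = f^{(k)}(s)$. So $N^{2k} \leq E_{2k}(C) \sum_s f^{(k)}(s)^2 = E_{2k}(C)\, \| f^{(k)}\|_2^2$.

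It remains to bound $\| f^{(k)}\|_2^2$ where $f^{(k)} = (1_A \ast 1_B)^{(k)} = 1_A^{(k)} \ast 1_B^{(k)}$. Now $\| 1_A^{(k)} \ast 1_B^{(k)}\|_2^2 = \sum_x \widehat{1_A}(x)^{2k}\,\overline{\text{...}}$ — more cleanly, by Parseval and the convolution theorem, $\| 1_A^{(k)} \ast 1_B^{(k)} \|_2^2 = \| \widehat{1_A}^k \widehat{1_B}^k \|_2^2 \leq \| \widehat{1_A}^k \widehat{1_B}^k \|_2^2$, and then I would apply Cauchy--Schwarz in the Fourier variable: $\| \widehat{1_A}^k\widehat{1_B}^k\|_2^2 \leq \| \widehat{1_A}^{2k}\|_2 \| \widehat{1_B}^{2k}\|_2$... but that gives $E_{4k}$, not the desired mixed sum. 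Instead the right manipulation is to expand $\|1_A^{(k)} \ast 1_B^{(k)}\|_2^2$ combinatorially: it counts $(a_1,\dots,a_k,a_1',\dots,a_k',b_1,\dots,b_k,b_1',\dots,b_k')$ with $\sum a_i + \sum b_j = \sum a_i' + \sum b_j'$, i.e. $\sum(a_i - a_i') = \sum(b_j' - b_j)$. Grouping by the common value $x$ of $\sum(a_i-a_i')$, this is exactly $\sum_x 1_A^{(k)}\circ 1_A^{(k)}(x)\cdot 1_B^{(k)}\circ 1_B^{(k)}(x)$, and then by Cauchy--Schwarz on this sum plus the trivial bounds $1_A^{(k)}\circ 1_A^{(k)}(x) \leq \abs{A}^{2k-?}\cdot$(...) — here I must be careful: I want to pull out $\abs{A}^{2k}\abs{B}^{2k}$ and leave $\sum_x 1_A\circ 1_A(x)^k 1_B\circ 1_B(x)^k$. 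The clean way: $1_A^{(k)} \circ 1_A^{(k)}(x) = \sum_{y_1+\cdots+y_k = x} \prod_i 1_A\circ 1_A(y_i)$ is false in general, but $\sum_x 1_A^{(k)}\circ 1_A^{(k)}(x)\, h(x)$ for $h \geq 0$ can be compared to $(1_A\circ 1_A)^{(k)}$ against $h$; more efficiently, by Hölder, $\sum_x 1_A^{(k)}\circ 1_A^{(k)}(x) 1_B^{(k)}\circ 1_B^{(k)}(x) \leq \bigl(\sum_x (1_A^{(k)}\circ 1_A^{(k)}(x))^{?}\bigr)\cdots$.

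The cleanest endpoint, which I would adopt: since $1_A^{(k)} \circ 1_A^{(k)} = (1_A \circ 1_A)^{(k)}$ as functions (both have Fourier transform $\abs{\widehat{1_A}}^{2k}$), we get $\|f^{(k)}\|_2^2 = \sum_x (1_A\circ 1_A)^{(k)}(x)\,(1_B\circ 1_B)^{(k)}(x)$, and then one application of Hölder with exponents $k$ and $k/(k-1)$... no — instead use Cauchy--Schwarz: this is $\leq \|(1_A\circ 1_A)^{(k)}\|_\infty^{(k-1)/k}\cdots$. Actually the slick finish is Young's inequality: $\|(1_A\circ 1_A)^{(k)}\|_q \leq \|1_A\circ 1_A\|_p^k$ for appropriate exponents, combined with $\|1_A \circ 1_A\|_1 = \abs{A}^2$, to extract exactly $\abs{A}^{2k}$ and leave $\|1_A\circ 1_A\|_k^k = \sum_x 1_A\circ 1_A(x)^k$; pairing symmetrically for $B$ and combining via Cauchy--Schwarz in $x$ on $\sum_x 1_A\circ 1_A(x)^k 1_B \circ 1_B(x)^k$ gives the claim.

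\emph{Main obstacle.} The subtle point — and the step I expect to be the crux — is the last one: correctly using Young's/Hölder's inequality to pass from $\sum_x (1_A\circ 1_A)^{(k)}(x)(1_B\circ 1_B)^{(k)}(x)$ to $\abs{A}^{2k}\abs{B}^{2k}\sum_x 1_A\circ 1_A(x)^k 1_B\circ 1_B(x)^k$ with exactly the stated exponents and no loss. One must interleave the $k$-fold convolution structure with the $L^1$ mass $\abs{A}^2$ of $1_A\circ 1_A$ (used $k$ times, once per convolution slot except one) and the remaining $L^k$ factor, and likewise for $B$, then apply Cauchy--Schwarz in $x$ at the very end to symmetrise — being careful that the combinatorial identity $1_A^{(k)}\circ 1_A^{(k)} = (1_A\circ 1_A)^{(k)}$ is only an identity of functions, valid because it holds on the Fourier side, so one should pass through Parseval rather than attempt a direct combinatorial bijection. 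Everything else (the iterated Cauchy--Schwarz producing $E_{2k}(C)$ and the reduction to $\|f^{(k)}\|_2^2$) is routine.
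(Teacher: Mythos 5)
There is a genuine gap, and it is structural rather than a matter of missing details. Your plan reduces, after the first Cauchy--Schwarz/H\"{o}lder step, to bounding $N^{2k}\leq E_{2k}(C)\,\lVert f^{(k)}\rVert_2^2$ with $f=1_A\ast 1_B$, and then to controlling $\lVert f^{(k)}\rVert_2^2=\sum_x (1_A\circ 1_A)^{(k)}(x)\,(1_B\circ 1_B)^{(k)}(x)$ by quantities depending on $A$ and $B$ alone. First, the bookkeeping does not close: you obtain a bound on $N^{2k}$, while the lemma bounds $N^{4k}$ with a \emph{single} factor of $E_{2k}(C)$; squaring gives $E_{2k}(C)^2$, and the only other way to reach $N^{4k}$ is to multiply by $N^{2k}\leq \abs{A}^{2k}\abs{B}^{2k}$, which consumes the entire prefactor and forces you to prove
\[\sum_x (1_A\circ 1_A)^{(k)}(x)\,(1_B\circ 1_B)^{(k)}(x)\ \leq\ \sum_x 1_A\circ 1_A(x)^{k}\,1_B\circ 1_B(x)^{k}.\]
This last inequality is false for $k\geq 2$: take $B$ a single element and $A$ an arithmetic progression, so the left-hand side equals $E_{2k}(A)\gg \abs{A}^{2k-1}$ while the right-hand side is $\abs{A}^{k}$. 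Since taking $C\supseteq A+B$ makes $N=\abs{A}\abs{B}$, no $C$-free bound on $\lVert f^{(k)}\rVert_2^2$ can rescue the route. The same defect shows up in your proposed finish: Young/H\"{o}lder applied to $(1_A\circ 1_A)^{(k)}$ and $(1_B\circ 1_B)^{(k)}$ separately produces products of norms of $1_A\circ 1_A$ and $1_B\circ 1_B$, which do not sit below the \emph{mixed} sum $\sum_x 1_A\circ 1_A(x)^k1_B\circ 1_B(x)^k$; that mixed sum encodes that a single difference $x$ is shared by all $k$ slots, and this information is destroyed the moment you pass to $\lVert 1_A^{(k)}\ast 1_B^{(k)}\rVert_2^2$, where only the total sum of the $A$- and $B$-differences is constrained. (Your intermediate claim $N^{2k}\leq E_{2k}(C)\lVert f^{(k)}\rVert_2^2$ is true, though the physical-side ``Cauchy--Schwarz in $s$'' as written does not give it -- it follows from Parseval and H\"{o}lder as in Lemma~\ref{lem-energies} -- but truth of that step is not the issue; it is simply too lossy.)

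The missing idea is the one the paper uses: form the bipartite graph between $A$ and $B$ with an edge $ab$ whenever $a+b\in C$, lower-bound the number of $2k$-cycles by $\delta^{2k}\abs{A}^k\abs{B}^k$ (Sidorenko's theorem for even cycles), observe that a cycle $a_1b_1\cdots a_kb_k$ produces $2k$ elements $c_i=a_i+b_i$, $c_i'=b_i+a_{i+1}$ of $C$ with $c_1+\cdots+c_k=c_1'+\cdots+c_k'$, and apply Cauchy--Schwarz once against $E_{2k}(C)$. The resulting constraint system $a_i-a_i'=b_i'-b_i=a_{i+1}-a_{i+1}'$ is cyclic, so all differences equal one common $x$, which is exactly what yields $\sum_x 1_A\circ 1_A(x)^k1_B\circ 1_B(x)^k$ with the stated prefactor $\abs{A}^{2k}\abs{B}^{2k}$. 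Some device of this kind, enforcing a common difference across the $k$ slots before any Cauchy--Schwarz decouples $A$ from $B$, is indispensable; purely Fourier/Young manipulations of $1_A\ast 1_B$ cannot produce the lemma.
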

\begin{proof}
Consider the bipartite graph $G$ with vertex set $A\times B$ in which $a$ and $b$ are joined by an edge if $a+b\in C$, so that the number of edges in $G$ is precisely $\sum_{c\in C}1_A\ast 1_B(c)=\delta \abs{A}\abs{B}$, say.

Let $V_{2k}$ count the number of cycles of length $2k$ in $G$. One one hand, $V_{2k}\geq \delta^{2k}\abs{A}^k\abs{B}^k$ -- this is a well-known fact in graph theory, and is a special case of Sidorenko's conjecture. The particular case of even cycles was proved by Sidorenko \cite{Si91}. The identity
\[(a_1+b_1)+(a_2+b_2)+(a_k+b_k)=(b_1+a_2)+\cdots+(b_k+a_1)\]
implies that, if $a_1b_1\cdots a_kb_k$ is a cycle, then letting $c_i=a_i+b_i$ and $d_i=b_i+a_{i+1}$ (where $a_{k+1}=a_1$) we have $c_i,d_i\in C$ and
\[c_1+\cdots+c_k=d_1+\cdots+d_k.\]
In other words, the number of cycles of length $2k$ is equal to
\[V_{2k}=\sum_{\substack{c_1,\ldots,c_k,d_1,\ldots,d_k\in C\\ c_1+\cdots+c_k=d_1+\cdots+d_k}}\sum_{a_1,\ldots,a_k\in A}\sum_{b_1,\ldots,b_k\in B}\prod_{i=1}^k 1_{a_i+b_i=c_i}1_{b_i+a_{i+1}=d_i}.\]
By the Cauchy-Schwarz inequality it follows that $V_{2k}^2$ is bounded above by
\[E_{2k}(C)\brac{ \sum_{\substack{a_1,\ldots,a_k'\in A\\b_1,\ldots,b_k'\in B}}\sum_{\substack{c_1,\ldots,d_k\in C\\ c_1+\cdots+c_k=d_1+\cdots+d_k}} \prod_{i=1}^k 1_{a_i+b_i=c_i=a_i'+b_i'}1_{b_i+a_{i+1}=d_i=b_i'+a_{i+1}}}.\]
In the bracketed sum, if $a_i=a_i'$ or $b_i=b_i'$ for any $1\leq i\leq k$ then $a_i=a_i'$ and $b_i=b_i'$ for all $1\leq i\leq k$, and hence the bracketed expression equals $V_{2k}$. Otherwise it is at most
\[\sum_{a_1,\ldots,a_k'\in A}\sum_{b_1,\ldots,b_k'\in B} \prod_{i=1}^k  1_{0\neq a_i-a_i'=b_i'-b_i=a_{i+1}-a_{i+1}'}=\sum_{x\neq 0}1_A\circ 1_A(x)^k1_B\circ 1_B(x)^k.\]
That is,
\[V_{2k}^2\leq E_{2k}(C)\brac{V_{2k}+\sum_{x\neq 0}1_A\circ 1_A(x)^k1_B\circ 1_B(x)^k}\]
which after rearranging and recalling $V_{2k}\geq \delta^{2k}\abs{A}^k\abs{B}^k$ yields the result.
\end{proof}

Taking $C$ to be the set of popular sums or differences implies the following.
\begin{lemma}\label{lem-pop}
If either $\abs{A+A}\leq K\abs{A}$ or $\abs{A-A}\leq K\abs{A}$ then, for any $\ell\geq k\geq 1$,
\[\abs{A}^{6k-3}\ll_k K^{2k-1}E_{4\ell}(A)^{\frac{k-1}{\ell-1}}\brac{ K^{2k-1}\abs{A}^{3-2k}E_{4\ell}(A)^{\frac{k-1}{\ell-1}}+ \sum_{x\neq 0}1_A\circ 1_A(x)^{2k}},\]
where the implied constant depends on $k$ only.
\end{lemma}
\begin{proof}
We first prove the result for sums by taking $B=A$. Let $C=\{ x : 1_A\ast 1_A(x)\geq \frac{1}{2K}\abs{A}\}$, so that $\sum_{c\in C}1_A\ast 1_A(c) \geq \frac{1}{2}\abs{A}^2$. By Lemma~\ref{lem-shkredov} it follows that
\[\abs{A}^{4k}\ll_k E_{2k}(C)\brac{E_{2k}(C)+\sum_{x\neq 0} 1_A\circ 1_A(x)^{2k}}.\]
By Lemma~\ref{lem-energies}, for any $\ell \geq k$,
\[E_{2k}(C) \leq \abs{C}^{\frac{\ell-k}{\ell-1}}E_{2\ell}(C)^{\frac{k-1}{\ell-1}}.\]
It follows that, since $\abs{C}\leq K\abs{A}$ and, noting $(2K)^{-1}\abs{A}1_C\leq 1_A\ast 1_A$,
\[E_{2\ell}(C) \leq (2K)^{2\ell}\abs{A}^{-2\ell}E_{4\ell}(A),\]
we have
\[E_{2k}(C) \ll_k K^{2k-1+\frac{k-1}{\ell-1}}\abs{A}^{3-2k-3\frac{k-1}{\ell-1}}E_{4\ell}(A)^{\frac{k-1}{\ell-1}}.\]
and hence (taking advantage of $K\leq \abs{A}^3$ to simplify exponents slightly)
\[\abs{A}^{4k}\ll_k K^{4k-2}\abs{A}^{6-4k}E_{4\ell}(A)^{2\frac{k-1}{\ell-1}}+ K^{2k-1}\abs{A}^{3-2k}E_{4\ell}(A)^{\frac{k-1}{\ell-1}}\sum_{x\neq 0}1_A\circ 1_A(x)^{2k}\]
which yields the result.

The argument for differences is nearly identical. This time we take $B=-A$ and $C=\{ x : 1_A\circ 1_A(x)\geq \frac{1}{2K}\abs{A}\}$ and repeat the above. The simple observations that $1_A \circ 1_A(x) = 1_{-A} \circ 1_{-A}(x)$ and that 
\[ E_{2\ell}(C) \leq (2K)^{2\ell}\abs{A}^{-2\ell}E_{4\ell}(A)\]
ensure that all steps taken for sums are valid.
\end{proof}

We may now use Lemma~\ref{lem-pop} together with the bounds from Section~\ref{sec-add} to obtain an improved lower bound on both $\abs{A+A}$ and $\abs{A-A}$ for sets $A$ which are efficiently covered by a low rank multiplicative group.

\begin{theorem}\label{th-growth}
Let $r,M\geq 1$ and $\delta\in (0,1)$ be such that
\[\delta \log M \geq \max(r,(\log M)^{1/6}).\]
If $A\subset \bbc$ is $M$-covered by a rank $r$ multiplicative group and 
\[\min(\abs{A+A},\abs{A-A})=K\abs{A}\]
then
\[\max(K,M)\gg \abs{A}^{5/7-O(\delta^{1/5})} .\]
\end{theorem}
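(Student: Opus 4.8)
The plan is to combine the three ingredients advertised in the sketch: the higher-energy inequality of Lemma~\ref{lem-pop}, the bound on $E_8(A)$ from Lemma~\ref{lem-energy}, and the pointwise bound on $1_A\circ 1_A$ from Lemma~\ref{lem-point}. Concretely, I would apply Lemma~\ref{lem-pop} with $k=2$, which gives
\[
\abs{A}^{12}\leq K^{4}E_{8}(A)\sum_x 1_A\circ 1_A(x)^{4},
\]
so the task reduces to bounding the two factors $E_8(A)$ and $\sum_x 1_A\circ 1_A(x)^4$ in terms of $M$, $\abs{A}$, and $\delta$.

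For $E_8(A)$, I would invoke Lemma~\ref{lem-energy} with $k=4$ and $m=\lfloor\delta^{-1/5}\rfloor$, which under the standing hypothesis $\delta\log M\geq\max(r,(\log M)^{1/6})$ yields $E_8(A)\ll 2^{O(\delta^{-1/5})}\abs{A}^4+\abs{A}M^{6+O(\delta^{1/5})}$. For the fourth moment of the difference convolution, the strategy is to split off the term $x=0$, which contributes $\abs{A}^4$, and for $x\neq 0$ apply the pointwise bound $1_A\circ 1_A(x)\ll M^{1+O(\delta^{1/5})}$ of Lemma~\ref{lem-point}, giving
\[
\sum_{x\neq 0}1_A\circ 1_A(x)^4\leq \brac{\max_{x\neq 0}1_A\circ 1_A(x)}^2\sum_{x\neq 0}1_A\circ 1_A(x)^2\ll M^{2+O(\delta^{1/5})}E(A).
\]
Then I would feed in the bound $E(A)\ll 2^{O(\delta^{-1/5})}\abs{A}^2+\abs{A}M^{2+O(\delta^{1/5})}$ from Theorem~\ref{th-olden}, obtaining $\sum_x 1_A\circ 1_A(x)^4\ll 2^{O(\delta^{-1/5})}(\abs{A}^4+M^{4+O(\delta^{1/5})}\abs{A})$. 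Here I would dispose of the second term by the now-standard device of assuming $M\ll\abs{A}^{3/4}$ (otherwise $\max(K,M)\geq M\gg\abs{A}^{3/4}$ already exceeds the desired $\abs{A}^{7/10}$ and we are done), so that effectively $\sum_x 1_A\circ 1_A(x)^4\ll 2^{O(\delta^{-1/5})}\abs{A}^4$.

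Substituting these two bounds back into the displayed consequence of Lemma~\ref{lem-pop} gives
\[
\abs{A}^{12}\ll 2^{O(\delta^{-1/5})}K^{4}\brac{\abs{A}^4+\abs{A}M^{6+O(\delta^{1/5})}}\abs{A}^4,
\]
i.e. $\abs{A}^{12-O(\delta^{-1/5})}\ll K^4\abs{A}^8+K^4\abs{A}^5 M^{6+O(\delta^{1/5})}$. The first term on the right forces $K\gg\abs{A}^{1-O(\delta^{-1/5}/\log\abs A)}$, which is far more than enough; the binding case is the second, which rearranges to $K^4 M^{6}\gg\abs{A}^{7-O(\delta^{1/5}\log\abs A)}$ — absorbing the lower-order exponent corrections into the error term — and hence $\max(K,M)^{10}\gg\abs{A}^{7-o(\cdots)}$, giving the claimed $\max(K,M)\gg\abs{A}^{7/10-O(\delta^{1/5})}$.

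The routine obstacles are purely bookkeeping: tracking the various $2^{O(\delta^{-1/5})}$ and $M^{O(\delta^{1/5})}$ factors and verifying that, with $\delta$ eventually sent to $0$ (as $\abs{A}\to\infty$), they all collapse into the stated $\abs{A}^{-O(\delta^{1/5})}$ loss; one also has to check that the hypothesis $\delta\log M\geq\max(r,(\log M)^{1/6})$ is exactly what is needed to legitimately apply both Lemma~\ref{lem-point} and Lemma~\ref{lem-energy} with the chosen $m=\lfloor\delta^{-1/5}\rfloor$. The one genuinely substantive point — and the place where the argument could conceivably be wasteful — is the passage from $\sum_x 1_A\circ 1_A(x)^4$ to the product $\max_{x\neq0}1_A\circ 1_A(x)^2\cdot E(A)$: this is the step that actually exploits the improved pointwise bound $M^{1+o(1)}$ rather than the trivial $M^{2}$, and getting the full strength of Theorem~\ref{th-main} out of it requires that both Lemma~\ref{lem-point} and the $E_8$ bound be as sharp as stated. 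I expect no real difficulty beyond careful arithmetic with exponents.
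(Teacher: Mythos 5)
Your proposal is correct and follows essentially the same route as the paper: Lemma~\ref{lem-pop} with $k=2$, the $E_8$ bound from Lemma~\ref{lem-energy} with $m=\lfloor\delta^{-1/5}\rfloor$, and the pointwise bound of Lemma~\ref{lem-point} combined with the $E_4$ bound to control $\sum_x 1_A\circ 1_A(x)^4$. The only cosmetic difference is that you discard the term $\abs{A}M^{4+O(\delta^{1/5})}$ by assuming $M\ll\abs{A}^{3/4}$, whereas the paper keeps all three terms $\abs{A}^8+\abs{A}^5M^6+\abs{A}^2M^{10}$ and checks each yields at least the exponent $7/10$; both are fine.
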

\begin{proof}
By Lemma~\ref{lem-energy}
\[E_4(A) \ll 2^{O(m)}\abs{A}^2+m^{O(m^3(r+m))}\abs{A}M^{2+\frac{2}{m-1}}.\] 
We apply this with $m=\lfloor \delta^{-1/5}\rfloor$, so that
\[E_4(A) \lesssim \abs{A}^2+\abs{A}M^2,\]
where $\ls$ hides losses polynomial in $\abs{A}^{\delta^{1/5}}$. By Lemma~\ref{lem-point} it follows that
\[\sum_{x\neq 0} 1_A\circ 1_A(x)^4\lesssim M^2\brac{\abs{A}^2+\abs{A}M^2}.\]
Applying Lemma~\ref{lem-pop} with $k=2$ therefore implies, for any $\ell\geq 2$,
\[\abs{A}^{9}\ls K^{3}E_{4\ell}(A)^{\frac{1}{\ell-1}}\brac{K^3\abs{A}^{-1}E_{4\ell}(A)^{\frac{1}{\ell-1}}+M^2\abs{A}^2+M^4\abs{A}}.\]
By Lemma~\ref{lem-energy} again 
\[E_{4\ell}(A)^{\frac{1}{\ell-1}} \ls \abs{A}^{2+O(1/\ell)}+\abs{A}^{O(1/\ell)}M^{4}\ls \abs{A}^2+M^4\]
taking $\ell$ sufficiently large ($\asymp \log \abs{A}$ say). It follows that 
\[\abs{A}^{9}\ls K^{3}(\abs{A}^2+M^4)\brac{K^3\abs{A}^{-1}(\abs{A}^2+M^4)+M^2\abs{A}^2+M^4\abs{A}}.\]
Simplifying the right-hand side yields
\[\abs{A}^9\ls K^6\abs{A}^3+K^6M^4\abs{A}+K^3M^2\abs{A}^4+K^3M^4\abs{A}^3\]
\[ +K^6M^8\abs{A}^{-1}+K^3M^6\abs{A}^2+K^3M^8\abs{A}\]
which implies
\[\max(K,M) \gtrsim \abs{A}^{5/7}\]
as claimed. 
\end{proof}
We highlight here that in the proof above we showed that, if $C$ is the set of popular sums or differences used in the proof of Theorem\ref{th-growth}, then
\[ E_4(C) \lesssim K^3 (M^4 + \abs{A}^2) \abs{A}^{-1}.\]

We may now prove the full quantitative version of Theorem~\ref{th-main}.

\begin{theorem}\label{th-mainprecise}
Let $A\subset \bbq\backslash \{0\}$ be a finite set such that $\omega(n)\leq k$ for all $n\in A$. If $\delta\in (0,1)$ is such that
\[\delta \log \abs{A} \geq \max(k,(\log \abs{A})^{1/6})\]
then
\[\max(\abs{A+A},\abs{AA})\geq O(k)^{-k}\abs{A}^{12/7-O(\delta^{1/5})}\]
and
\[\max(\abs{A-A},\abs{AA})\geq O(k)^{-k}\abs{A}^{12/7-O(\delta^{1/5})} .\] 
\end{theorem}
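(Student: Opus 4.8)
The plan is to combine the two halves of the paper developed above: the combinatorial reduction of Section~\ref{sec-comb} (which transfers the few-prime-factors hypothesis to being efficiently covered by a low-rank multiplicative group) with the sumset lower bound of Theorem~\ref{th-growth} (which extracts growth from such a covering). First I would set $M = \abs{AA}/\abs{A}$, and apply the first part of Proposition~\ref{prop-small} with $B=A$, so $\ell=k$ and $K=M$. This produces a subset $A'\subseteq A$ with $\abs{A'}\gg \abs{A}$ which is $O(2^{2k}M)$-covered by a multiplicative group of rank $O(k^2)$. Since $k = o(\log\abs{A}/\log\log\abs{A})$ is absorbed into the hypothesis $\delta\log\abs{A}\geq \max(k,(\log\abs{A})^{1/6})$, the covering parameter is $M' = O(2^{2k}M)$ and the rank is $r = O(k^2)$, and I would check that $\delta \log M' \geq \max(r, (\log M')^{1/6})$ holds (possibly after enlarging $\delta$ by a constant or using a slightly worse power of $\delta$), so that the hypotheses of Theorem~\ref{th-growth} are met for $A'$.

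Next I would apply Theorem~\ref{th-growth} to $A'$. Writing $\abs{A'+A'} = K'\abs{A'}$, it gives $\max(K', M') \gg \abs{A'}^{7/10 - O(\delta^{1/5})} \gg \abs{A}^{7/10 - O(\delta^{1/5})}$. Now $\abs{A+A} \geq \abs{A'+A'} = K'\abs{A'} \gg K'\abs{A}$, so $\abs{A+A}/\abs{A} \gg K'$, and $M' = O(2^{2k})M = O(2^{2k})\abs{AA}/\abs{A}$. Therefore
\[
\max\brac{\frac{\abs{A+A}}{\abs{A}}, \frac{\abs{AA}}{\abs{A}}} \gg 2^{-2k}\max(K', M') \gg 2^{-2k}\abs{A}^{7/10-O(\delta^{1/5})}.
\]
Multiplying through by $\abs{A}$ and noting $\max(\abs{A+A},\abs{AA}) \geq \abs{A}$ gives $\max(\abs{A+A},\abs{AA}) \gg 2^{-2k}\abs{A}^{17/10 - O(\delta^{1/5})}$. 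The factor $2^{-2k}$ is of the shape $O(k)^{-k}$ (indeed better), which matches the claimed bound; if one instead uses the rank-$O(k)$ version from part (2) of Proposition~\ref{prop-small} one pays a factor $(2k)^{-k}$ in the size of $A'$, which again is $O(k)^{-k}$, so either route delivers the stated constant.

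I expect the only real bookkeeping obstacle to be verifying that the parameter constraint $\delta \log M' \geq \max(r,(\log M')^{1/6})$ in Theorem~\ref{th-growth} follows from the stated hypothesis $\delta\log\abs{A}\geq \max(k,(\log\abs{A})^{1/6})$, together with the degenerate cases where $M$ is very small (say $M = O(1)$): there $\log M'$ may fail to dominate $r = O(k^2)$. This is handled exactly as in the sketch — if $M \ll \abs{A}^{3/4}$ fails we are already done since $\abs{AA}\geq \abs{A}^{7/4} \gg \abs{A}^{17/10}$ directly, and if $M$ is so small that the covering is essentially by the group itself, then $A'$ sits in $O(1)$ cosets and Theorem~\ref{th-subspace} gives $E(A')\ll_r\abs{A}^2$ hence $\abs{A+A}\gg\abs{A}^{2-o(1)}$ outright; in the intermediate regime $\log M \asymp \log\abs{A}$ the constraint transfers directly. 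Packaging these three regimes cleanly, with the constants tracked so that the loss is genuinely $O(k)^{-k}$ and the exponent loss genuinely $O(\delta^{1/5})$, is the main thing to get right, but it is routine given everything proved earlier.
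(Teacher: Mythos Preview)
Your approach is essentially the paper's: reduce via Proposition~\ref{prop-small} to a large subset covered by a low-rank multiplicative group, then invoke Theorem~\ref{th-growth}. The one substantive difference is that you lead with part~(1) of Proposition~\ref{prop-small} (rank $O(k^2)$, no loss in $\abs{A'}$), whereas the paper uses part~(2) (rank $O(k)$, paying $(2k)^{-k}$ in $\abs{A'}$). This choice matters for the parameter check you flag: the hypothesis only gives $\delta\log\abs{A}\geq k$, which does \emph{not} imply $\delta\log M'\gtrsim k^2$ when $k$ is near the top of its allowed range, so with $r=O(k^2)$ the constraint $\delta\log M'\geq r$ required by Theorem~\ref{th-growth} can genuinely fail, and no constant rescaling of $\delta$ repairs it. The alternative you mention via part~(2) is exactly what the paper does and is what makes the bookkeeping go through cleanly; the $(2k)^{-k}$ loss in $\abs{A'}$ there is precisely the origin of the $O(k)^{-k}$ factor in the theorem statement.
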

Note in particular that we obtain a lower bound of $\abs{A}^{12/7-o(1)}$ provided $k=o(\frac{\log \abs{A}}{\log\log \abs{A}})$. 
\begin{proof}
Let $K\abs{A}=\max(\abs{A+A},\abs{AA})$. (The proof for the case of $\max(\abs{A-A},\abs{AA})$ is identical.) By Proposition~\ref{prop-small} there is a set $A'\subseteq A$ of size $\abs{A'}\gg (2k)^{-k}\abs{A}$ which is $M$-covered by a rank $r$ multiplicative group, with $M\ll 4^kK$ and $r\ll k$. After dilating $\delta$ by some some small absolute constant if necessary, we may assume that
\[\delta \log M \geq \max(r,(\log M)^{1/6}),\]
so that we can apply Theorem~\ref{th-growth} to deduce that, if $\abs{A'+A'}= K'\abs{A'}$, 
\[\max(K',M) \gg \abs{A}^{5/7-O(\delta^{1/5})},\]
which yields the desired conclusion.
\end{proof}

\section{Many-fold sums and products}

Next, we prove our result on many sums and products, Theorem~\ref{th-main2}. This is, more or less, a direct consequence of the bounds on additive energies in Lemma~\ref{lem-energy}. We first prove the most general quantitative statement possible, and then specialise to obtain Theorem~\ref{th-main2}.

\begin{theorem}\label{th-main2precise}
Let $A\subset \bbq\backslash\{0\}$ be a finite set such that $\omega(n)\leq k$ for all $n\in A$. If $\delta\in (0,1)$ is such that
\[\delta \log \abs{A} \geq \max(k,(\log \abs{A})^{1/6})\]
then for any $1/2\leq c\leq 1$ and $m\geq 2$ either
\[\Abs{A^{(m)}}> \abs{A}^{c(m-1)+1}\]
or there is $A'\subseteq A$ of size $\abs{A'}\gg (2mk)^{-k}\abs{A}$ such that 
\[E_{2m}(A')\ll \abs{A'}^{1+2c(m-1)+O(m\delta^{1/5})}.\]
In particular, either
\[\Abs{A^{(m)}} > \abs{A}^{cm+1-c}\textrm{ or }\Abs{mA}\gg (mk)^{-O(mk)} \Abs{A}^{2(1-c)m+2c-1-O(m\delta^{1/5})}.\]
\end{theorem}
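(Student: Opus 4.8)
\textbf{Proof proposal for Theorem~\ref{th-main2precise}.}

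The plan is to combine a pigeonholing step (to extract an intermediate product set with controlled multiplicative doubling) with Proposition~\ref{prop-small} and the higher-energy bound of Lemma~\ref{lem-energy}. First I would dispose of the trivial case: if $\Abs{A^m} > \abs{A}^{c(m-1)+1}$ there is nothing to prove, so assume $\Abs{A^m}\leq \abs{A}^{c(m-1)+1}$. Writing $A^m = A\cdot A^{m-1}$, and more generally $A^j = A\cdot A^{j-1}$ for $1\le j\le m$, a telescoping/pigeonhole argument on the ratios $\Abs{A^j}/\Abs{A^{j-1}}$ shows there exists some $1\le i<m$ such that, setting $B=A^{i}$, one has $\Abs{AB}\le \abs{A}^{c}\Abs{B}$ (since the product of these $m-1$ ratios is $\Abs{A^m}/\abs{A}\le \abs{A}^{c(m-1)}$, at least one ratio is $\le\abs{A}^{c}$, and $\Abs{A^j}\ge\Abs{A^{j-1}}$ lets one pass from the ratio to $AB$ with $B=A^{i}$). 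Here $\omega(n)\le ki$ for all $n\in B$, and $ki\le km$.

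Next I apply part (2) of Proposition~\ref{prop-small} with this $B$ and $K=\abs{A}^{c}$: this produces $A'\subseteq A$ with $\abs{A'}\gg (2km)^{-k}\abs{A}$ (bounding $2\ell\le 2ki\le 2km$) which is $O(2^{k+ki}\abs{A}^{c})$-covered by a multiplicative group of rank $O(k)$. Since $k=o(\log\abs{A}/\log\log\abs{A})$ the covering parameter is $M\le \abs{A}^{c+o(1)}$ and $r=O(k)$, and the hypothesis $\delta\log\abs{A}\ge\max(k,(\log\abs{A})^{1/6})$ guarantees (after possibly shrinking $\delta$ by an absolute constant, exactly as in the proof of Theorem~\ref{th-mainprecise}) that $\delta\log M\ge\max(r,(\log M)^{1/6})$, so Lemma~\ref{lem-energy} applies. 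Taking $k$ there to be our $m$ and letting the auxiliary parameter $\to\infty$ at rate $\lfloor\delta^{-1/5}\rfloor$ gives
\[
E_{2m}(A')\ll 2^{O(m\delta^{-1/5})}\abs{A'}^{m}+m^{O(m^3\delta^{-1/5}(r+\delta^{-1/5}))}\abs{A'}M^{2m-2+o(1)}\ll \abs{A'}^{1+2c(m-1)+O(m\delta^{1/5})},
\]
using $M^{2m-2}\le\abs{A'}^{2c(m-1)+o(1)}$ and absorbing the $m$-dependent constants into the $O(m\delta^{1/5})$ exponent loss (which is where the $k=o(\cdot)$ regime and the relation between $\delta$ and $\abs{A}$ are used). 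This proves the main dichotomy.

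For the final ``in particular'' I substitute into \eqref{eq-holderenergy}: $\Abs{mA}\ge\abs{A'}^{2m}/E_{2m}(A')\gg (mk)^{-O(mk)}\abs{A}^{2m-1-2c(m-1)-O(m\delta^{1/5})}=(mk)^{-O(mk)}\abs{A}^{2(1-c)m+2c-1-O(m\delta^{1/5})}$, where the factor $(mk)^{-O(mk)}$ comes from raising $\abs{A'}\gg(2km)^{-k}\abs{A}$ to the power $2m$. For the alternative, if the dichotomy lands on the first horn then $\Abs{A^m}>\abs{A}^{c(m-1)+1}=\abs{A}^{cm+1-c}$, matching the stated clause. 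I expect the only genuinely delicate point to be the bookkeeping in the pigeonhole step — making sure the intermediate set $B=A^{i}$ is chosen so that both $\Abs{AB}\le\abs{A}^{c}\Abs{B}$ holds and the prime-factor count $\omega$ on $B$ stays $O(km)$ — together with verifying that all the $m$- and $k$-dependent constants coming out of Theorem~\ref{th-subspace} and Lemma~\ref{lem-energy} really are absorbed by the advertised error terms $O(m\delta^{1/5})$ and $(mk)^{-O(mk)}$ in the stated regime; the rest is routine.
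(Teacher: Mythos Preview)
Your proposal follows essentially the same route as the paper's own proof: pigeonhole on the ratios $\Abs{A^j}/\Abs{A^{j-1}}$ to find $B=A^i$ with $\Abs{AB}\le\abs{A}^c\Abs{B}$, apply part (2) of Proposition~\ref{prop-small}, then feed the resulting covering into Lemma~\ref{lem-energy} with the auxiliary parameter $\lfloor\delta^{-1/5}\rfloor$, and finish via \eqref{eq-holderenergy}. The one step you leave implicit, which the paper states explicitly, is that the first term $2^{O(m\delta^{-1/5})}\abs{A'}^m$ is absorbed into $\abs{A'}^{1+2c(m-1)+O(m\delta^{1/5})}$ precisely because $c\ge 1/2$ forces $m\le 1+2c(m-1)$; this is where the lower bound on $c$ in the hypothesis is used.
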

\begin{proof}
Let $m\geq 2$ and suppose that $\Abs{A^{(m)}}\leq \abs{A}^{c(m-1)+1}$. By the pigeonhole principle there exists some $1\leq i<m$ such that, with $B=A^{(i)}$, we have $\abs{AB}\leq \abs{A}^c\abs{B}$. Furthermore, clearly $\omega(n)\leq mk$ for all $n\in B$. It follows by Proposition~\ref{prop-small} that there is $A'\subseteq A$ of size $\abs{A'}\gg (2mk)^{-k}\abs{A}$ which is $M$-covered by a rank $r$ multiplicative group, where $r\ll k$ and $M\ll 2^{O(mk)}\abs{A}^c$.

We now apply Lemma~\ref{lem-energy} with $k$ replaced by $m$ and $m$ replaced by $\lfloor\delta^{-1/5}\rfloor$ to deduce that
\[E_{2m}(A') \leq 2^{O(m\delta^{-1/5})}\abs{A'}^m+\abs{A'}^{1+2c(m-1)+O(m\delta^{1/5})}.\]
Since $m\leq 1+2c(m-1)$ this simplifies to give the inequality in the theorem statement.
\end{proof}

Choosing $c=2/3$ we deduce the following precise formulation of Theorem~\ref{th-main2} (note that when $m=2$ this recovers the bound of \cite{HRSZ25} as a special case). 
\begin{theorem}
Let $A\subset \bbq\backslash\{0\}$ be a finite set such that $\omega(n)\leq k$ for all $n\in A$. If $\delta\in (0,1)$ is such that
\[\delta \log \abs{A} \geq \max(k,(\log \abs{A})^{1/6})\]
then for any $m\geq 2$
\[\max(\Abs{mA},\Abs{A^{(m)}}) \gg (mk)^{-O(mk)}\abs{A}^{\frac{2}{3}m+\frac{1}{3}-O(m\delta^{1/5})}.\]
\end{theorem}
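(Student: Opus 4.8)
The plan is to deduce this final theorem from Theorem~\ref{th-main2precise} by the standard device of choosing the free parameter $c$ optimally and then cleaning up the error terms. First I would apply Theorem~\ref{th-main2precise} with the choice $c = 2/3$, which is admissible since $1/2 \leq 2/3 \leq 1$. With this choice the theorem's dichotomy reads: either $\Abs{A^m} > \abs{A}^{\frac{2}{3}m + \frac{1}{3}}$, in which case the claimed bound holds trivially (the factor $(mk)^{-O(mk)}$ and the $-O(m\delta^{1/5})$ loss only help us), or there is a large subset $A' \subseteq A$ with $\abs{A'} \gg (2mk)^{-k}\abs{A}$ and
\[
E_{2m}(A') \ll \abs{A'}^{1 + 2c(m-1) + O(m\delta^{1/5})} = \abs{A'}^{\frac{4}{3}m - \frac{1}{3} + O(m\delta^{1/5})}.
\]

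In the second case, the key step is to invoke the elementary Hölder-type inequality \eqref{eq-holderenergy}, namely $\Abs{mA'} \cdot E_{2m}(A') \geq \abs{A'}^{2m}$, to convert the upper bound on the higher energy into a lower bound on the iterated sumset:
\[
\Abs{mA} \geq \Abs{mA'} \geq \frac{\abs{A'}^{2m}}{E_{2m}(A')} \gg \abs{A'}^{2m - \frac{4}{3}m + \frac{1}{3} - O(m\delta^{1/5})} = \abs{A'}^{\frac{2}{3}m + \frac{1}{3} - O(m\delta^{1/5})}.
\]
Finally I would substitute the lower bound $\abs{A'} \gg (2mk)^{-k}\abs{A}$ and absorb the resulting factor of $(2mk)^{-k(\frac{2}{3}m + \frac13)}$ into a term of the form $(mk)^{-O(mk)}$, using that the exponent $\frac{2}{3}m + \frac{1}{3}$ is $O(m)$; this is exactly the shape of the claimed prefactor. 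In either branch of the dichotomy we therefore obtain $\max(\Abs{mA}, \Abs{A^m}) \gg (mk)^{-O(mk)}\abs{A}^{\frac{2}{3}m + \frac{1}{3} - O(m\delta^{1/5})}$.

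There is essentially no hard part here: the entire content has been front-loaded into Theorem~\ref{th-main2precise} (and, behind it, Proposition~\ref{prop-small}, the $S$-unit bound of Theorem~\ref{th-subspace}, and the Hölder amplification in Lemma~\ref{lem-energy}). The only point requiring a modicum of care is bookkeeping of the error exponents: one must check that the $O(m\delta^{1/5})$ loss in the exponent of $\abs{A'}$ translates to an $O(m\delta^{1/5})$ loss in the exponent of $\abs{A}$ (it does, since $\abs{A'}$ and $\abs{A}$ differ only by the polynomially-bounded-in-the-exponent factor $(2mk)^{-k}$, whose effect is already captured by the separate $(mk)^{-O(mk)}$ prefactor), and that the hypothesis $\delta \log\abs{A} \geq \max(k, (\log\abs{A})^{1/6})$ is exactly the one needed to apply Theorem~\ref{th-main2precise}. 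No new ideas are required.
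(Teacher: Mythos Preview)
Your proposal is correct and is essentially the paper's own argument. The paper's proof is simply the one-line remark ``choosing $c=2/3$'' in Theorem~\ref{th-main2precise}, whose ``In particular'' clause already contains the H\"older step you spell out; you have merely made that implicit deduction explicit, and your bookkeeping of the $(mk)^{-O(mk)}$ prefactor and the $O(m\delta^{1/5})$ exponent loss is accurate.
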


\section{On the size of $\abs{A+AA}$}
Finally, we prove Theorem~\ref{th-main3}, a near-optimal lower bound for $\abs{A+AA}$ for sets of rationals with few prime divisors. We require the following result, an asymmetric energy bound, which is similar to Lemma~\ref{lem-energies}.
\begin{lemma}\label{lem-energiesasymm}
Let $A$ and $B$ be finite sets in any abelian group. For any integers $m,n\geq 2$
\[E(A,B) \leq E_{2m}(A)^{\frac{1}{m}}E_{2n}(B)^{\frac{1}{m(n-1)}}\abs{B}^{1-\frac{n}{m(n-1)}}.\]
\end{lemma}
\begin{proof}
Passing to a Freiman-isomorphic model if necessary (see \cite[Chapter 5]{TaVu06}) we may assume that the ambient group is finite. By orthogonality, the count to be estimated is equal to
\[\bbe_{\gamma}  \Abs{\widehat{1_A}(\gamma)}^2\Abs{\widehat{1_B}(\gamma)}^2,\]
where the expectation is over the dual group, and, for example,  $\widehat{1_A}(\gamma)=\sum_{n\in A}\gamma(n)$. By H\"{o}lder's inequality this is at most
\begin{align*}
&\leq \left(\bbe_{\gamma}|\widehat{1_{A}}(\gamma)|^{2m}\right)^{\frac{1}{m}}\left(\bbe_{\gamma}|\widehat{1_{B}}(\gamma)|^{2n}\right)^{\frac{1}{m(n-1)}}\left(\bbe_{\gamma}|\widehat{1_{B}}(\gamma)|^2\right)^{1-\frac{n}{m(n-1)}}\\
            &=E_{2m}(A)^{\frac{1}{m}}E_{2n}(B)^{\frac{1}{m(n-1)}} |B|^{1-\frac{n}{m(n-1)}}
\end{align*}
as required, where once again we have used orthogonality in the final equality.  
\end{proof}

Using this coupled with our existing higher energy estimates yields the following.
\begin{theorem}
Let $A\subset \bbq\backslash\{0\}$ be a finite set such that $\omega(n)\leq k$ for all $n\in A$. If $\delta\in (0,1)$ is such that
\[\delta \log \abs{A} \geq \max(k,(\log \abs{A})^{1/6})\]
then, for any finite set $B$,
\[\abs{A+B}\geq O(k)^{-k}(\abs{A}\abs{B})^{1-O(\delta^{-1/5})}\min\brac{1, \frac{\abs{A}^3}{\abs{AA}^2}}.\]
\end{theorem}
\begin{proof}
Let $K=\abs{AA}/\abs{A}$. By Proposition~\ref{prop-small} there is a set $A'\subseteq A$ of size $\abs{A'}\gg (2k)^{-k}\abs{A}$ which is $M$-covered by a rank $r$ multiplicative group, with $M\ll 4^kK$ and $r\ll k$. For any set $B$, applying Lemma~\ref{lem-energiesasymm} with $n=2$ and using the trivial bound of $E(B)\leq \abs{B}^3$ together with Lemma~\ref{lem-energy} we have
\[E(A',B)\ll m^{O(m^3(m+r))}\brac{\abs{A}^m+\abs{A}M^{2m-1}}^{\frac{1}{m}}\abs{B}^{1+1/m}.\]
In particular, taking $m=\lfloor \delta^{-1/5}\rfloor$ as in the proof of Theorem~\ref{th-growth} we have
\[E(A',B)\ls (\abs{A}+M^2)\abs{B}\]
where $\ls$ hides losses polynomial in $(\abs{A}\abs{B})^{\delta^{1/5}}$. The claim now follows from the Cauchy-Schwarz inequality, which implies
\[\abs{A'+B}\geq \frac{\abs{A'}^2\abs{B}^2}{E(A',B)}.\]
\end{proof}

Taking $B=AA$ immediately yields the following corollary.

\begin{corollary}\label{cor-main3precise}
Let $A\subset \bbq\backslash\{0\}$ be a finite set such that $\omega(n)\leq k$ for all $n\in A$. If $\delta\in (0,1)$ is such that
\[\delta \log \abs{A} \geq \max(k,(\log \abs{A})^{1/6})\]
then
\[\abs{A+AA}\geq O(k)^{-k}\abs{A}^{-O(\delta^{-1/5})}\min\brac{\abs{A}\abs{AA}, \frac{\abs{A}^4}{\abs{AA}}}.\]
In particular, 
\[\abs{A+AA}\geq O(k)^{-k}\abs{A}^{2-O(\delta^{-1/5})}.\]
\end{corollary}
Note in particular that we obtain a lower bound of $\abs{A}^{2-o(1)}$ provided $k=o(\frac{\log \abs{A}}{\log\log \abs{A}})$. Finally, we note that if $\abs{A+AA}\leq M\abs{A}^2$, say, then either
\[\abs{AA}\ls M\abs{A}\]
or 
\[\abs{AA}\gtrsim M^{-1}\abs{A}^2\]
(where $\ls$ hides losses polynomial in $k^{-k}\abs{A}^{-\delta^{1/5}}$). In the former case, the inequality from the end of the proof of Theorem~\ref{th-growth} yields
\[\abs{A}^9\ls M^{O(1)}\brac{K^6\abs{A}+K^3\abs{A}^2+K^6\abs{A}^3+K^3\abs{A}^4},\]
and hence $K \geq M^{-O(1)}\abs{A}$, and we deduce the following.
\begin{corollary}
Let $A\subset \bbq\backslash\{0\}$ be a finite set such that $\omega(n)\leq k$ for all $n\in A$. If $\delta\in (0,1)$ is such that
\[\delta \log \abs{A} \geq \max(k,(\log \abs{A})^{1/6})\]
and 
\[\abs{A+AA}\leq M\abs{A}^2\]
then
\[\max(\abs{A+A},\abs{AA})\geq O(k)^{-k}M^{-O(1)}\abs{A}^{2-O(\delta^{-1/5})}.\]
\end{corollary}

\bibliographystyle{plain}
\bibliography{FewPrimesSP} 
\end{document}